\theoremstyle{plain}
\newtheorem{theorem}{Theorem}[section]
\newtheorem{lemma}[theorem]{Lemma}
\newtheorem{prop}[theorem]{Proposition}
\theoremstyle{definition}
\newtheorem{remark}[theorem]{Remark}
\newtheorem{cor}[theorem]{Corollary}
\theoremstyle{remark}
\begin{document}
\title [Numerical radius inequalities of operator matrices ]{Numerical radius inequalities of operator matrices from a new norm on $\mathcal{B}(\mathcal{H})$} 

\author[P. Bhunia, A. Bhanja, D. Sain and K. Paul ] {P. Bhunia, A. Bhanja, D. Sain and K. Paul  }

\address[Bhunia] { Department of Mathematics, Jadavpur University, Kolkata 700032, West Bengal, India}
\email{pintubhunia5206@gmail.com}

\address[Bhanja]{Department of Mathematics\\ Vivekananda College Thakurpukur\\ Kolkata\\ West Bengal\\India\\ }
\email{aniketbhanja219@gmail.com}

\address[Sain]{Department of Mathematics\\ Indian Institute of Science\\ Bengaluru 560012\\ Karnataka\\ INDIA}
\email{saindebmalya@gmail.com}

\address[Paul] {Department of Mathematics, Jadavpur University, Kolkata 700032, West Bengal, India}
\email{kalloldada@gmail.com}

%\thanks will become a 1st page footnote.
\thanks{ Mr. Pintu Bhunia would like to thank UGC, Govt. of India for the financial support in the form of SRF. 
}
%\thanks{}
%\thanks{}
%    Information for second author

%General info

\subjclass[2010]{Primary 47A30, 47A12;  Secondary 47A63.}
\keywords{Numerical radius, Bounded linear operator, Operator inequalities, Hilbert space.}

%\date{}
\maketitle
\begin{abstract}
	This paper is a continuation of a recent work on a new norm, christened the $ (\alpha, \beta)$-norm, on the space of bounded linear operators on a Hilbert space. We obtain some upper bounds for the said norm of $n\times n$ operator matrices. As an application of the present study, we estimate bounds for the numerical radius and the usual operator norm of $n\times n$ operator matrices, which generalize the existing ones. 
\end{abstract}

\section{\textbf{Introduction}}
\noindent The purpose of the present article is to study the bounds for the newly introduced \cite{SBBP} $(\alpha,\beta)$-norm of $n\times n$ operator matrices, from which we obtain bounds for the numerical radius of $n\times n$ operator matrices. Let us first introduce the following notations and terminologies to be used throughout the article.\\  

\noindent Let $\mathcal{H}_i, \mathcal{H}_j$ be two complex Hilbert spaces with usual inner product $\langle.,.\rangle$ and let $\mathcal{B}(\mathcal{H}_i,\mathcal{H}_j)$ denote the space of all bounded linear operators from $\mathcal{H}_i$ to $\mathcal{H}_j$. If $\mathcal{H}_i= \mathcal{H}_j=\mathcal{H}$ then we write $\mathcal{B}(\mathcal{H},\mathcal{H})=\mathcal{B}({\mathcal{H}}).$ For $T\in \mathcal{B}({\mathcal{H}})$, we write $Re(T)$ and $Im(T)$ for the real part of $T$ and the imaginary part of $T$, respectively, i.e., $Re(T)=\frac{T+T^*}{2}$ and $Im(T)=\frac{T-T^*}{2i}$. Let $T^*$ denote the adjoint of $T$ and let $|T|$ be the positive operator $(T^*T)^{\frac{1}{2}}$. Let $\sigma (T)$ denote the spectrum of $T$. The spectral radius of $T$, denoted by $r(T)$, is defined by $r(T)=\sup \{|\lambda|: \lambda \in \sigma (T)\}.$ 
The numerical range of $T$,  denoted by $W(T)$,  is  defined as $W(T)=\{\langle Tx,x \rangle:x\in \mathcal{H}, \|x\|=1\}.$ The usual operator norm and the numerical radius of $T$, denoted by $\|T\|$ and $ w(T)$, respectively,  are defined as $\|T\|=\sup \{\|Tx\| : x\in \mathcal{H}, \|x\|=1\}$ and $w(T)= \sup \{|c|: c\in W(T) \}.$ Let $M_T$ denote the usual operator norm attainment set of $T$, i.e., $M_T=\{x\in \mathcal{H}: \|Tx\|=\|T\|, \|x\|=1\}$. 
%\begin{eqnarray*}
%\|T\|&=& \sup \{\|Tx\| : x\in \mathcal{H}, \|x\|=1\},\\
%w(T)&=& \sup \{|c|: c\in W(T) \}.
%\end{eqnarray*}
It is well-known that the numerical radius defines a norm on $\mathcal{B}({\mathcal{H}})$ and is equivalent to the usual operator norm, satisfying that for $T\in \mathcal{B}({\mathcal{H}}),$
\[\frac{1}{2}\|T\| \leq w(T)\leq \|T\|.\] 
The study of the numerical range of an operator and the associated numerical radius inequalities are an important area of research in operator theory and it has attracted many mathematicians \cite{OK,PSK,BBP4,BP,BBP,HD} over the years. With an aim to develop better upper and lower bounds for the numerical radius, a new norm named as the $(\alpha, \beta)$-norm, was introduced on $ \mathcal{B}({\mathcal{H}}) $ in \cite{SBBP}. For $T\in \mathcal{B}({\mathcal{H}}),$ the $(\alpha,\beta)$-norm of $T$, denoted by $\|T\|_{\alpha,\beta}$, is defined as: 
$$\|T\|_{\alpha,\beta}=\sup \left \{\sqrt{  \alpha |\langle Tx ,x \rangle |^2+\beta \|Tx\|^2 }: x\in \mathcal{H},\|x\|=1 \right \},$$
where $\alpha, \beta$ are real positive constants with $(\alpha ,\beta) \neq (0,0).$ We note that if $\alpha=1, \beta=0$ then $\|T\|_{\alpha,\beta}=w(T),$ and if $\alpha=0, \beta=1$ then $\|T\|_{\alpha,\beta}=\|T\|$. Also, if we consider $\alpha=\beta=1,$ then we have the modified Davis-Wielandt radius of $T$, that is,  $\|T\|_{\alpha,\beta}=dw^*(T)$, (see \cite{BSP}). In this article, we consider $ \alpha + \beta =1 $, i.e., $ \beta = 1 - \alpha$ and explore the $\alpha$-norm of $n \times n $ operator matrices, where the $\alpha$-norm of $T$ is defined as:
$$\|T\|_{\alpha}=\sup \left \{\sqrt{  \alpha |\langle Tx ,x \rangle |^2+(1-\alpha) \|Tx\|^2 }: x\in \mathcal{H},\|x\|=1 \right \}.$$

\noindent We compute the exact value of the $\alpha$-norm of $2\times 2$ operator matrices in $\mathcal{B}(\mathcal{H} \oplus \mathcal{H})$ of the form $\left(\begin{array}{cc}
	0 & X\\
	0 & 0
	\end{array}\right)$, where $X \in \mathcal{B}(\mathcal{H})$. We obtain some upper bounds for the $\alpha$-norm of $n \times n$ operator matrices, which generalize the existing numerical radius inequalities and the usual operator norm inequalities of $n \times n$ operator matrices.   As an application our results, we estimate new upper bounds for the numerical radius and the usual operator norm of $n \times n$ operator matrices.

\section{\textbf{Main results}}

\noindent We begin this section with the following proposition, the proof of which follows from the weakly unitarily invariant property of the $\alpha$-norm, i.e., for $T \in \mathcal{B}(\mathcal{H})$, $\|U^*TU\|_{\alpha}=\|T\|_{\alpha}$ for every unitary operator $U \in \mathcal{B}(\mathcal{H})$ (see \cite[Prop. 2.6]{SBBP}). 

\begin{prop}\label{lem1b}
	Let $A,B \in \mathcal{B}(\mathcal{H})$. Then the following results hold:
\begin{enumerate}[label=$(\alph*)$]
\item $\left \|\left(\begin{array}{cc}
	0 & A\\
	e^{{\rm i} \theta}B & 0
	\end{array}\right) \right \|_{\alpha}=\left \| \left(\begin{array}{cc}
	0 & A\\
	B & 0
	\end{array}\right) \right \|_{\alpha} $, for every $\theta \in \mathbb{R}.$
	\item $\left \|\left(\begin{array}{cc}
	0 & A\\
	B & 0
	\end{array}\right) \right \|_{\alpha} =\left \|\left(\begin{array}{cc}
	0 & B\\
	A & 0
	\end{array}\right) \right \|_{\alpha}.$ 
	\item $\left \|\left(\begin{array}{cc}
	A & 0\\
	0 & B
	\end{array}\right) \right \|_{\alpha}=\left \|\left(\begin{array}{cc}
	B & 0\\
	0 & A
	\end{array}\right) \right \|_{\alpha}.$ 
	\item $\left \|\left(\begin{array}{cc}
	A & B\\
	B & A
	\end{array}\right) \right \|_{\alpha}=\left \|\left(\begin{array}{cc}
	A-B & 0\\
	0 & A+B
	\end{array}\right) \right \|_{\alpha}.$
	\end{enumerate}
	\end{prop}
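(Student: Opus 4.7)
\medskip

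\noindent\textbf{Proof proposal.} The plan is to handle each of the four identities by exhibiting an explicit unitary $U \in \mathcal{B}(\mathcal{H} \oplus \mathcal{H})$ for which $U^{*} T U$ transforms the matrix on the left-hand side into (a scalar multiple of) the matrix on the right-hand side, and then invoke the weak unitary invariance of $\|\cdot\|_{\alpha}$ cited in the preamble. I will also need the trivial observation that $\|\lambda T\|_{\alpha}=\|T\|_{\alpha}$ whenever $|\lambda|=1$, which is immediate from the defining formula of $\|\cdot\|_{\alpha}$ since $|\langle \lambda Tx,x\rangle|^{2}=|\langle Tx,x\rangle|^{2}$ and $\|\lambda Tx\|^{2}=\|Tx\|^{2}$.

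For parts $(b)$ and $(c)$, I would use the self-adjoint swap unitary $U=\bigl(\begin{smallmatrix} 0 & I \\ I & 0\end{smallmatrix}\bigr)$ and just compute: conjugating $\bigl(\begin{smallmatrix} 0 & A \\ B & 0\end{smallmatrix}\bigr)$ by $U$ gives $\bigl(\begin{smallmatrix} 0 & B \\ A & 0\end{smallmatrix}\bigr)$, and conjugating $\bigl(\begin{smallmatrix} A & 0 \\ 0 & B\end{smallmatrix}\bigr)$ by $U$ gives $\bigl(\begin{smallmatrix} B & 0 \\ 0 & A\end{smallmatrix}\bigr)$, so both identities follow from weak unitary invariance.

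For $(a)$, I would take the diagonal phase unitary $U_{\theta}=\bigl(\begin{smallmatrix} I & 0 \\ 0 & e^{i\theta/2}I\end{smallmatrix}\bigr)$. A direct multiplication shows
\[
U_{\theta}^{*}\begin{pmatrix} 0 & A \\ e^{i\theta}B & 0 \end{pmatrix} U_{\theta} \;=\; e^{i\theta/2}\begin{pmatrix} 0 & A \\ B & 0 \end{pmatrix},
\]
and then the $\alpha$-norm equality follows by combining weak unitary invariance with the scalar invariance mentioned above.

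Part $(d)$ is essentially a Hadamard change of basis. I would take $U=\tfrac{1}{\sqrt{2}}\bigl(\begin{smallmatrix} I & I \\ -I & I\end{smallmatrix}\bigr)$ (which is unitary, as $U^{*}U=I$ is a one-line check) and verify by direct $2\times 2$ block multiplication that
\[
U^{*}\begin{pmatrix} A & B \\ B & A \end{pmatrix}U \;=\; \begin{pmatrix} A-B & 0 \\ 0 & A+B \end{pmatrix},
\]
after which weak unitary invariance finishes the job. There is no real obstacle in the proof; the only non-mechanical step is guessing the correct unitary in each part, and in all four cases the standard choices (swap, diagonal phase, Hadamard) do the job cleanly.
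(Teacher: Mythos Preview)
Your proposal is correct and follows precisely the route the paper indicates: the paper simply states that the proposition follows from the weak unitary invariance $\|U^{*}TU\|_{\alpha}=\|T\|_{\alpha}$ and gives no further details, so your explicit choices of unitaries (swap, diagonal phase, Hadamard) together with the scalar invariance $\|\lambda T\|_{\alpha}=\|T\|_{\alpha}$ for $|\lambda|=1$ amount to spelling out exactly what the paper leaves implicit.
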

	
\noindent Next, we estimate upper and lower bounds for the $\alpha$-norm of $2\times 2$ operator matrices in $\mathcal{B}(\mathcal{H} \oplus \mathcal{H})$ of the form $\left(\begin{array}{cc}
	X & 0\\
	0 & Y
	\end{array}\right)$, where $X,Y \in \mathcal{B}(\mathcal{H})$. Let us first note the following inequality for $X \in \mathcal{B}(\mathcal{H}), $ $$\alpha|\langle Xx,x \rangle|^2+(1-\alpha)\|Xx\|^2\leq \|X\|^2_{\alpha}\|x\|^2\,\,\text{ for all }\,x\in \mathcal{H} \,\,\text{with}\,\,\|x\|\leq 1.$$

\begin{theorem}\label{est1}
Let $X,Y \in \mathcal{B}(\mathcal{H})$. Then the following inequalities hold:
\begin{eqnarray*}
(i)~~\max \left \{\|X\|_{\alpha}, \|Y\|_{\alpha} \right \} &\leq & \left \|\left(\begin{array}{cc}
	X & 0\\
	0 & Y
	\end{array}\right)\right \|_{\alpha} \\
	&\leq&  \max \left \{\sqrt{\|X\|_{\alpha}^2+\alpha w^2(X)}, \sqrt{\|Y\|_{\alpha}^2+\alpha w^2(Y)} \right \}\\
	&\leq & \sqrt{2}\max \left \{\|X\|_{\alpha}, \|Y\|_{\alpha} \right \}.
	\end{eqnarray*}
	$(ii)$ $\left \|\left(\begin{array}{cc}
	X & 0\\
	0 & Y
	\end{array}\right)\right \|_{\alpha} \leq  \sqrt{ \max \left \{ \|X\|_{\alpha}^2, \|Y\|_{\alpha}^2   \right \} +\alpha w(X)w(Y) }.$\\
	$(iii)$ $\left \|\left(\begin{array}{cc}
	X & 0\\
	0 & Y
	\end{array}\right)\right \|_{\alpha} \leq  \|X\|_{\alpha}+ \|Y\|_{\alpha}.$
\end{theorem}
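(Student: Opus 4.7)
The plan is to fix an arbitrary unit vector $z=(x,y)^{T}\in\mathcal{H}\oplus\mathcal{H}$, for which $\|x\|^{2}+\|y\|^{2}=1$, and to estimate the key quantity
\[
Q \;:=\; \alpha\bigl|\langle Xx,x\rangle+\langle Yy,y\rangle\bigr|^{2}+(1-\alpha)\bigl(\|Xx\|^{2}+\|Yy\|^{2}\bigr),
\]
which equals $\alpha|\langle Tz,z\rangle|^{2}+(1-\alpha)\|Tz\|^{2}$ for $T=\mathrm{diag}(X,Y)$. Writing $a=\langle Xx,x\rangle$ and $b=\langle Yy,y\rangle$, the building blocks are the preamble inequality $\alpha|a|^{2}+(1-\alpha)\|Xx\|^{2}\le\|X\|_{\alpha}^{2}\|x\|^{2}$ (and its $Y$-analogue) together with the standard estimates $|a|\le w(X)\|x\|^{2}$ and $|b|\le w(Y)\|y\|^{2}$. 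All three assertions follow by estimating $Q$ differently and then taking the supremum.

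For the lower bound in (i), insert $z=(x,0)$ with $\|x\|=1$ to recover $\|X\|_{\alpha}$, and symmetrically for $\|Y\|_{\alpha}$. For the middle upper bound of (i), expand $|a+b|^{2}\le 2|a|^{2}+2|b|^{2}$ to write
\[
Q\le\bigl[\alpha|a|^{2}+(1-\alpha)\|Xx\|^{2}\bigr]+\bigl[\alpha|b|^{2}+(1-\alpha)\|Yy\|^{2}\bigr]+\alpha|a|^{2}+\alpha|b|^{2}.
\]
The bracketed terms are bounded by $\|X\|_{\alpha}^{2}\|x\|^{2}$ and $\|Y\|_{\alpha}^{2}\|y\|^{2}$ via the preamble inequality, while the residuals are controlled by $\alpha w(X)^{2}\|x\|^{4}\le\alpha w(X)^{2}\|x\|^{2}$ (since $\|x\|\le 1$) and its $Y$-analogue. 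Collecting coefficients of $\|x\|^{2}$ and $\|y\|^{2}$ and using $\|x\|^{2}+\|y\|^{2}=1$ yields the middle bound. The rightmost inequality is then immediate from $\alpha w(X)^{2}\le\|X\|_{\alpha}^{2}$, which follows directly from the definition of the $\alpha$-norm.

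For (ii), use the tighter expansion $|a+b|^{2}\le|a|^{2}+|b|^{2}+2|a||b|$. The non-cross summands combine as above with the preamble inequality to give at most $\max\{\|X\|_{\alpha}^{2},\|Y\|_{\alpha}^{2}\}$ after using $\|x\|^{2}+\|y\|^{2}=1$, and the cross term is handled by $2\alpha|a||b|\le 2\alpha w(X)w(Y)\|x\|^{2}\|y\|^{2}\le\alpha w(X)w(Y)$, the last step using $2\|x\|^{2}\|y\|^{2}\le 1$ on the unit sphere. Taking the square root yields (ii).

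For (iii), set $p=\sqrt{\alpha|a|^{2}+(1-\alpha)\|Xx\|^{2}}$ and $q=\sqrt{\alpha|b|^{2}+(1-\alpha)\|Yy\|^{2}}$. The preamble inequality gives $p\le\|X\|_{\alpha}\|x\|\le\|X\|_{\alpha}$ and $q\le\|Y\|_{\alpha}$. Since $\sqrt{\alpha}\,|a|\le p$ and $\sqrt{\alpha}\,|b|\le q$, one has $2\alpha|a||b|\le 2pq$, so
\[
Q\le p^{2}+q^{2}+2\alpha|a||b|\le(p+q)^{2},
\]
and taking square roots gives $\sqrt{Q}\le p+q\le\|X\|_{\alpha}+\|Y\|_{\alpha}$. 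None of these steps is genuinely hard; the main obstacle is choosing the right expansion of $|a+b|^{2}$ in each case so that it pairs cleanly with the preamble inequality and the numerical-radius bounds on $|a|$ and $|b|$, producing each of the three target inequalities without slack.
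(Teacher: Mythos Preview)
Your proof is correct and, for parts (i) and (ii), follows essentially the same route as the paper: the same decomposition $Q=\alpha|a+b|^{2}+(1-\alpha)(\|Xx\|^{2}+\|Yy\|^{2})$, the same use of the preamble inequality $\alpha|a|^{2}+(1-\alpha)\|Xx\|^{2}\le\|X\|_{\alpha}^{2}\|x\|^{2}$ for $\|x\|\le 1$, and the same two expansions of $|a+b|^{2}$ (the crude $\le 2|a|^{2}+2|b|^{2}$ for (i), the exact $=|a|^{2}+|b|^{2}+2|a||b|$ for (ii)).

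The only genuine difference is in (iii). The paper does not argue pointwise there: it simply writes $\mathrm{diag}(X,Y)=\mathrm{diag}(X,0)+\mathrm{diag}(0,Y)$, applies the triangle inequality for $\|\cdot\|_{\alpha}$, and then invokes (ii) with one block equal to $0$ to get $\|\mathrm{diag}(X,0)\|_{\alpha}\le\|X\|_{\alpha}$. Your argument instead stays at the level of a single unit vector, observing that $\sqrt{\alpha}\,|a|\le p$ and $\sqrt{\alpha}\,|b|\le q$ force $Q\le(p+q)^{2}$. Both are short and valid; your version is self-contained (it does not rely on the $\alpha$-norm being a norm or on (ii)), while the paper's version is a one-line consequence of structure already established.
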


\begin{proof}
$(i).$ Let $T=\left(\begin{array}{cc}
	X & 0\\
	0 & Y
	\end{array}\right)$. Let $x\in \mathcal{H} $ with $\|x\|=1$ and let $\tilde{x}=\left(\begin{array}{cc}
	x \\
	0 
	\end{array}\right) \in \mathcal{H}\oplus \mathcal{H}$. Clearly,  $\|\tilde{x}\|=1$.  Therefore, we have,
$$\sqrt{\alpha|\langle Xx,x \rangle|^2+(1-\alpha)\|Xx\|^2}=\sqrt{\alpha|\langle T\tilde{x},\tilde{x} \rangle|^2+(1-\alpha)\|T\tilde{x}\|^2}  \leq \|T\|_{\alpha}$$	
Taking supremum over all unit vector in $\mathcal{H}$, we get,  $$\|X\|_{\alpha} \leq \|T\|_{\alpha}.$$ Similarly, it can be proved that $$\|Y\|_{\alpha} \leq \|T\|_{\alpha}.$$ Combining the above two inequalities, we get the first inequality in $(i)$. Let us now prove the second inequality in $(i)$.
Let  $z=\left(\begin{array}{cc}
	x \\
	y 
	\end{array}\right) \in \mathcal{H}\oplus \mathcal{H} $ with $\|z\|=1$, i.e., $\|x\|^2+\|y\|^2=1$. Then we have,
	\begin{eqnarray*}
	\alpha|\langle Tz,z \rangle|^2+(1-\alpha)\|Tz\|^2 &=& \alpha \left|\langle Xx,x \rangle+ \langle Yy,y \rangle  \right|^2+(1-\alpha) (\|Xx\|^2+\|Yy\|^2 )\\
	&\leq& \alpha \left ( |\langle Xx,x \rangle|+ |\langle Yy,y \rangle| \right)^2+ (1-\alpha) \left ( \|Xx\|^2+\|Yy\|^2  \right )\\
	&\leq &  \alpha |\langle Xx,x \rangle|^2+(1-\alpha)\|Xx\|^2 +  \alpha |\langle Yy,y \rangle|^2+(1-\alpha)\|Yy\|^2   \\
	&& +\alpha \left ( |\langle Xx,x \rangle|^2+ |\langle Yy,y \rangle|^2 \right )\\
	&\leq& \|X\|_{\alpha}^2 \|x\|^2+\|Y\|_{\alpha}^2 \|y\|^2\\
	&& +\alpha \left ( w^2(X)\|x\|^2+ w^2(Y)\|y\|^2 \right ),~~ \mbox{since,} ~~ \|x\|\leq 1,\|y\|\leq 1\\
	&=& \left (\|X\|_{\alpha}^2+\alpha w^2(X)\right ) \|x\|^2+\left (\|Y\|_{\alpha}^2+\alpha w^2(Y)\right ) \|y\|^2\\
	&\leq& \max \left \{ \|X\|_{\alpha}^2+\alpha w^2(X), \|Y\|_{\alpha}^2+\alpha w^2(Y)\right \}.
	\end{eqnarray*}
Therefore, taking supremum over all unit vectors in $\mathcal{H}\oplus \mathcal{H}$, we get the second inequality in $(i)$. The remaining inequality in $(i)$ follows from the inequalities $\alpha w^2(X)\leq \|X\|_{\alpha}^2$ and $\alpha w^2(Y)\leq \|Y\|_{\alpha}^2$. This completes the proof of $(i)$.\\

$(ii).$  From $\alpha|\langle Tz,z \rangle|^2+(1-\alpha)\|Tz\|^2 \leq \alpha \left ( |\langle Xx,x \rangle|+ |\langle Yy,y \rangle| \right)^2+ (1-\alpha)\left ( \|Xx\|^2+\|Yy\|^2  \right )$, we get 
\begin{eqnarray*}
\alpha|\langle Tz,z \rangle|^2+(1-\alpha)\|Tz\|^2 &\leq& \alpha |\langle Xx,x \rangle|^2+(1-\alpha)\|Xx\|^2 +  \alpha |\langle Yy,y \rangle|^2+(1-\alpha)\|Yy\|^2   \\
	&& +2 \alpha |\langle Xx,x \rangle|~~ |\langle Yy,y \rangle| \\
	&\leq& \alpha |\langle Xx,x \rangle|^2+(1-\alpha)\|Xx\|^2 +  \alpha |\langle Yy,y \rangle|^2+(1-\alpha)\|Yy\|^2   \\
	&& + 2\alpha w(X)w(Y)\|x\|^2\|y\|^2 \\
	&\leq& \|X\|_{\alpha}^2 \|x\|^2+\|Y\|_{\alpha}^2 \|y\|^2\\
	&& +2\alpha w(X)w(Y)\|x\|\|y\|,~~ \mbox{since,} ~~ \|x\|\leq 1,\|y\|\leq 1\\ 
	&\leq& \max \left \{ \|X\|_{\alpha}^2, \|Y\|_{\alpha}^2   \right \} +\alpha w(X)w(Y).
\end{eqnarray*}
Taking supremum over all unit vectors in $\mathcal{H}\oplus \mathcal{H}$, we get the inequality in $(ii)$.\\

$(iii).$ The inequality in $(iii)$ follows from the triangle inequality of the $\alpha $-norm, and by using the inequality in $(ii)$.
\end{proof}

In the following theorem, we obtain the exact value of the $\alpha$-norm of $2\times 2$ operator matrices in $\mathcal{B}(\mathcal{H} \oplus \mathcal{H})$ of the form $\left(\begin{array}{cc}
	0 & X\\
	0 & 0
	\end{array}\right)$, where $X \in \mathcal{B}(\mathcal{H})$. 

\begin{theorem}\label{equal1}
Let $X \in \mathcal{B}(\mathcal{H})$. Then 
	\[ \left \|\left(\begin{array}{cc}
	0 & X\\
	0 & 0
	\end{array}\right)\right \|_{\alpha}=\begin{cases}
		\frac{1}{2\sqrt{\alpha}}~\|X\| &\text{if}~~\,\alpha >\frac{1}{2} \\
		\sqrt{1-\alpha}~\|X\| &\text{if}~~\,\alpha \leq \frac{1}{2}.
		\end{cases} \]	
\end{theorem}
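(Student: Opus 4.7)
The plan is to reduce the supremum to a one-variable calculus problem by exploiting the sparse structure of $T=\left(\begin{smallmatrix}0 & X\\ 0 & 0\end{smallmatrix}\right)$. For a unit vector $z=\left(\begin{smallmatrix}x\\ y\end{smallmatrix}\right)\in \mathcal{H}\oplus \mathcal{H}$ (so $\|x\|^2+\|y\|^2=1$), a direct computation yields $Tz=\left(\begin{smallmatrix}Xy\\ 0\end{smallmatrix}\right)$, whence $\langle Tz,z\rangle=\langle Xy,x\rangle$ and $\|Tz\|^2=\|Xy\|^2$. Thus
\[
\alpha|\langle Tz,z\rangle|^2+(1-\alpha)\|Tz\|^2 \;=\; \alpha|\langle Xy,x\rangle|^2+(1-\alpha)\|Xy\|^2,
\]
and the problem becomes one of maximizing the right-hand side over the constraint $\|x\|^2+\|y\|^2=1$.

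For the upper bound I would apply Cauchy--Schwarz $|\langle Xy,x\rangle|\leq \|Xy\|\|x\|$ and then $\|Xy\|\leq \|X\|\|y\|$. Writing $t=\|y\|\in[0,1]$, so $\|x\|^2=1-t^2$, this collapses the expression to $\|Xy\|^2(\alpha\|x\|^2+(1-\alpha))=\|Xy\|^2(1-\alpha t^2)\leq \|X\|^2\,t^2(1-\alpha t^2)$. So it remains to maximize $h(t)=t^2-\alpha t^4$ on $[0,1]$. From $h'(t)=2t(1-2\alpha t^2)$, the interior critical point lies at $t^2=1/(2\alpha)$, which belongs to $[0,1]$ exactly when $\alpha\geq 1/2$. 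Splitting cases gives $\max h = 1/(4\alpha)$ if $\alpha>1/2$ and $\max h=h(1)=1-\alpha$ if $\alpha\leq 1/2$, matching the two branches of the stated formula after taking square roots.

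For the reverse inequality I would exhibit approximate maximizers (the case $X=0$ being trivial). Pick a sequence $\{y_n\}\subset\mathcal{H}$ of unit vectors with $\|Xy_n\|\to\|X\|$, fix the optimal $t$ from the upper-bound analysis ($t=1/\sqrt{2\alpha}$ when $\alpha>1/2$, $t=1$ when $\alpha\leq 1/2$), and set $y=t\,y_n$ together with $x=\sqrt{1-t^2}\,\bigl(Xy_n/\|Xy_n\|\bigr)$. A short computation gives $|\langle Xy,x\rangle|=t\sqrt{1-t^2}\,\|Xy_n\|$ and $\|Xy\|^2=t^2\|Xy_n\|^2$, so $\alpha|\langle Tz,z\rangle|^2+(1-\alpha)\|Tz\|^2=t^2\|Xy_n\|^2(1-\alpha t^2)\to \|X\|^2 t^2(1-\alpha t^2)$ as $n\to\infty$, which delivers the lower bound.

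The only real obstacle is the case analysis in the calculus step; the rest is structural. In particular, one should be careful that $X$ need not attain its norm, which is why the lower bound proceeds through a limiting sequence $\{y_n\}$ rather than an exact maximizer. The boundary case $\alpha=1/2$ serves as a consistency check: both formulas give $\|X\|/\sqrt{2}$.
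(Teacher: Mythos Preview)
Your proof is correct and follows essentially the same route as the paper: both reduce to the expression $\alpha|\langle Xy,x\rangle|^2+(1-\alpha)\|Xy\|^2$, bound it above via Cauchy--Schwarz by a one-variable function (you use $t=\|y\|$, the paper uses $\theta$ with $\|y\|=\sin\theta$), and then realize the bound with the same approximating sequence $z_n$ built from a norming sequence $\{y_n\}$ for $X$ with first component proportional to $Xy_n$. Your parametrization is slightly cleaner, but there is no substantive difference.
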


\begin{proof}
Let $T =\left(\begin{array}{cc}
	0 & X\\
	0 & 0
	\end{array}\right)$. Let  $z=\left(\begin{array}{cc}
	x \\
	y 
	\end{array}\right) \in \mathcal{H}\oplus \mathcal{H} $ with $\|z\|=1$, i.e., $\|x\|^2+\|y\|^2=1$.
Then $ \langle Tz,z \rangle = \langle Xy,x \rangle ~~\mbox{and} ~~\|Tz\|= \|Xy\|.$ Now we have,
\begin{eqnarray*}
	\|T\|_{\alpha}^2=\sup_{\|z\|=1}(\alpha|\langle Tz,z \rangle|^2+(1-\alpha)\|Tz\|^2) &=& \sup_{\|x\|^2+\|y\|^2=1}(\alpha|\langle Xy,x \rangle|^2 +(1-\alpha)\|Xy\|^2) \\
	&\leq & \sup_{\|x\|^2+\|y\|^2=1} (\alpha \|X\|^2\|y\|^2\|x\|^2+ (1-\alpha) \|X\|^2\|y\|^2)\\
	&=& \sup_{\theta \in [0,\frac{\pi}{2}]} \|X\|^2 \sin^2 \theta (\alpha \cos^2\theta +(1-\alpha)). 
	\end{eqnarray*}
First we consider the case $\alpha>\frac{1}{2}.$ Then
$$\sup_{\theta \in [0,\frac{\pi}{2}]} \|X\|^2 \sin^2 \theta (\alpha \cos^2\theta +(1-\alpha))= \frac{1}{4\alpha} \|X\|^2.$$
Therefore, $\|T\|_{\alpha}^2 \leq \frac{1}{4\alpha} \|X\|^2.$ We claim that there exists a sequence $\{z_n\}$ in $\mathcal{H} \oplus \mathcal{H}$ with $\|z_n\|=1$ such that 
$$\lim_{n \to \infty} (\alpha |\langle Tz_{n},z_{n} \rangle|^2 + (1-\alpha) \|Tz_{n}\|^2) =\frac{1}{4\alpha} \|X\|^2.$$ 
\noindent Clearly, there exists a sequence $\{y_n \}$ in $\mathcal{H}$ with $\|y_n\|=1$ such that $\lim_{n \to \infty} \|Xy_n\| = \|X\|.$
Let $z_{n}=\frac{1}{\sqrt{\|Xy_n\|^2+k^2}}\left(\begin{array}{cc}
	Xy_n \\
	ky_n 
	\end{array}\right)$, where $k = \sqrt{ \frac{1}{2\alpha -1}}\|X\|$. Then
	$$\lim_{n \to \infty} \alpha |\langle Tz_{n},z_{n} \rangle|^2 + (1-\alpha) \|Tz_{n}\|^2 =\frac{1}{4\alpha} \|X\|^2.$$ 
Therefore, $\|T\|_{\alpha} = \frac{1}{2\sqrt{\alpha}} \|X\|$ if $\alpha>\frac{1}{2}.$\\
Next we consider the case $\alpha \leq \frac{1}{2}$. Then 
 $$\sup_{\theta \in [0,\frac{\pi}{2}]} \|X\|^2 \sin^2 \theta (\alpha \cos^2\theta +(1-\alpha))= (1-\alpha) \|X\|^2$$
Therefore, $\|T\|_{\alpha}^2 \leq (1-\alpha)\|X\|^2$. Proceeding as before, we can show that there exists a sequence $ \{ z_n \}, \|z_n\|=1 $ such that  $\lim_{n \to \infty}(\alpha|\langle Tz_n,z_n \rangle|^2+(1-\alpha) \|Tz_n\|^2)={(1-\alpha)} \|X\|^2.$ Therefore, $\|T\|_{\alpha}= \sqrt{(1-\alpha)} \|X\|$ if $ \alpha \leq \frac{1}{2}$. 
\end{proof}

%In particular, taking $\beta=1-\alpha$ and $0\leq \alpha \leq 1$ in Theorem \ref{equal1} we have the following $\alpha$-norm.

%\begin{cor}\label{equal1p}
%	Let $X \in \mathcal{B}(\mathcal{H})$. Then 
	%\[ \left \|\left(\begin{array}{cc}
%	0 & X\\
	%0 & 0
	%\end{array}\right)\right \|_{\alpha}=\begin{cases}
%	\frac{1}{2\sqrt{\alpha}}\|X\|, &~~\alpha >\frac{1}{2} \\
%	\sqrt{1-\alpha}\|X\|, &~~\alpha \leq \frac{1}{2}.
	%\end{cases} \]	
%\end{cor}

Our next goal is to obtain upper bounds for the $\alpha$-norm of $n\times n$ operator matrices in $\mathcal{B}(\oplus_{i=1}^n \mathcal{H}_i)$. We require the following lemmas for our purpose.

\begin{lemma}$($\cite[p. 44]{HJ}$)$\label{lem}
Let $T=(t_{ij})\in M_n(\mathbb{C})$ with $t_{ij}\geq 0$ for all $i,j$. Then $$w(T)=r\left (Re(T)\right)=\|Re(T)\|.$$
\end{lemma}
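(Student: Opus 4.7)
The plan is to establish the two equalities separately. The second equality $r(Re(T))=\|Re(T)\|$ comes for free: $Re(T)=\tfrac{T+T^*}{2}$ is Hermitian, and for any self-adjoint operator on a finite-dimensional inner product space the spectral radius equals the operator norm. So the real content of the lemma is the identity $w(T)=\|Re(T)\|$, and this is where the hypothesis $t_{ij}\geq 0$ must be used.

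For the upper bound $w(T)\leq\|Re(T)\|$, I would exploit the entrywise absolute value trick. Given a unit vector $x=(x_1,\dots,x_n)\in\mathbb{C}^n$, set $\tilde{x}=(|x_1|,\dots,|x_n|)$, which is again a unit vector with nonnegative real components. Expanding $\langle Tx,x\rangle=\sum_{i,j}t_{ij}x_j\overline{x_i}$ and using $t_{ij}\geq 0$, the triangle inequality yields
\[
|\langle Tx,x\rangle|\;\leq\;\sum_{i,j}t_{ij}|x_j||x_i|\;=\;\langle T\tilde{x},\tilde{x}\rangle.
\]
Since $\tilde{x}$ is real and $T$ has real entries, $\langle T\tilde{x},\tilde{x}\rangle$ is a real scalar and hence equals its own transpose, so it coincides with $\langle\tfrac{T+T^*}{2}\tilde{x},\tilde{x}\rangle=\langle Re(T)\tilde{x},\tilde{x}\rangle\leq\|Re(T)\|$. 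Taking the supremum over $x$ gives $w(T)\leq\|Re(T)\|$.

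For the reverse inequality $w(T)\geq r(Re(T))$, I would invoke Perron--Frobenius applied to the symmetric matrix $Re(T)$, whose entries $\tfrac{t_{ij}+t_{ji}}{2}$ are still nonnegative. Perron--Frobenius guarantees that $r(Re(T))$ is itself an eigenvalue of $Re(T)$; since $Re(T)$ is Hermitian with real eigenvalues, this forces $r(Re(T))=\lambda_{\max}(Re(T))=\sup_{\|x\|=1}\langle Re(T)x,x\rangle$. Combining this with the pointwise bound $\langle Re(T)x,x\rangle=Re\langle Tx,x\rangle\leq|\langle Tx,x\rangle|$ and taking supremum produces $r(Re(T))\leq w(T)$, closing the loop.

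The only subtle point is that the nonnegativity of the entries is used essentially in the first step: without $t_{ij}\geq 0$, pulling the absolute value inside the double sum to obtain $\langle T\tilde{x},\tilde{x}\rangle$ on the right would be illegitimate, and the inequality $w(T)\leq\|Re(T)\|$ can genuinely fail. Apart from that, the argument is a clean assembly of standard tools (self-adjoint norm $=$ spectral radius, and Perron--Frobenius), and no delicate estimates are needed.
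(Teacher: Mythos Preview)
Your proposal is correct. The paper, however, does not supply its own proof of this lemma: it is quoted verbatim from Horn and Johnson \cite[p.~44]{HJ} and used as a black box, so there is no in-paper argument to compare against.

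One small simplification worth noting: your appeal to Perron--Frobenius in the reverse inequality is not needed. For \emph{any} $T\in\mathcal{B}(\mathcal{H})$ and unit vector $x$ one has $|\langle Re(T)x,x\rangle|=|Re\langle Tx,x\rangle|\leq|\langle Tx,x\rangle|$, and since $Re(T)$ is self-adjoint this already gives $\|Re(T)\|=w(Re(T))\leq w(T)$ without any spectral information about nonnegative matrices. The nonnegativity hypothesis is thus used only in the upper bound $w(T)\leq\|Re(T)\|$, exactly where you flagged it as essential.
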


\begin{lemma}$($\cite{K88}$)$\label{lem9}
Let $T \in \mathcal{B}(\mathcal{H})$ be self-adjoint and let $x\in \mathcal{H}$. Then
\[|\langle Tx,x \rangle | \leq \langle |T|x,x \rangle.\]
\end{lemma}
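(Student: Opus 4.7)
The plan is to exploit the spectral theorem for self-adjoint operators. Since $T = T^*$, the spectrum $\sigma(T)$ is a compact subset of $\mathbb{R}$, and the continuous functional calculus lets me evaluate any continuous real-valued function at $T$. The key observation is that the two functions $t \mapsto |t| - t$ and $t \mapsto |t| + t$ are non-negative on all of $\mathbb{R}$, so the resulting operators $|T| - T$ and $|T| + T$ are both positive. Pairing each of these positive operators against an arbitrary vector $x \in \mathcal{H}$ yields the two inequalities
\[
\langle Tx, x\rangle \leq \langle |T|x, x\rangle \quad \text{and} \quad -\langle Tx, x\rangle \leq \langle |T|x, x\rangle,
\]
which combine to give $|\langle Tx, x\rangle| \leq \langle |T|x, x\rangle$. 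Note that $\langle Tx, x\rangle$ is real because $T$ is self-adjoint, so the absolute value on the left is just the ordinary real absolute value, legitimizing the last step.

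There is essentially no obstacle here: the estimate is an immediate consequence of the positivity of $|t| \pm t$ together with the order-preserving nature of the functional calculus. If one wishes to avoid invoking the functional calculus directly, an equivalent route is to decompose $T = T_+ - T_-$ with $T_{\pm} \geq 0$ and $T_+ T_- = 0$, so that $|T| = T_+ + T_-$; then $\langle Tx, x\rangle = \langle T_+ x, x\rangle - \langle T_- x, x\rangle$ is the difference of two non-negative reals, and the usual triangle inequality yields $|\langle Tx, x\rangle| \leq \langle T_+ x, x\rangle + \langle T_- x, x\rangle = \langle |T|x, x\rangle$. A third option, more in the spirit of the cited reference, is to view the inequality as a special case of the mixed Schwarz inequality $|\langle Tx, y\rangle|^2 \leq \langle |T|x, x\rangle \langle |T^*|y, y\rangle$ with $y = x$, using that $T^* = T$ forces $T^*T = TT^* = T^2$ and hence $|T^*| = |T|$.
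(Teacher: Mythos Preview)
Your argument is correct. The paper does not supply a proof of this lemma; it is quoted from \cite{K88} and used as a black box. Your functional-calculus proof via the positivity of $|t|\pm t$ is the standard one-line justification, and the two alternatives you sketch (the Jordan decomposition $T=T_+-T_-$ and the mixed Schwarz inequality with $y=x$) are equally valid and, in the last case, exactly in the spirit of Kittaneh's paper.
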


\begin{lemma}$($\cite{K88}$)$\label{lem5}
 Let $T \in \mathcal{B}({\mathcal{H}})$ with $T \geq 0$  and let $x \in \mathcal{H}$ with $\|x\|=1$. Then
\[\langle Tx,x \rangle ^p \leq \langle T^px,x \rangle,~~ \forall ~p \geq 1.\]  
\end{lemma}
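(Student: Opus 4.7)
The plan is to invoke the spectral theorem for the positive operator $T$ and reduce the inequality to a scalar Jensen-type inequality. Since $T \geq 0$, the spectrum $\sigma(T)$ lies in $[0, \|T\|]$, and we have a spectral resolution $T = \int_0^{\|T\|} \lambda \, dE(\lambda)$, where $E$ is the associated projection-valued measure. For a unit vector $x \in \mathcal{H}$, define the scalar Borel measure $\mu_x$ on $[0, \|T\|]$ by $\mu_x(B) = \langle E(B)x, x \rangle$. Because $\|x\| = 1$, we have $\mu_x([0, \|T\|]) = \langle x, x \rangle = 1$, so $\mu_x$ is a probability measure.

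By the spectral functional calculus, the two sides of the inequality become integrals against $\mu_x$:
\[
\langle Tx, x \rangle = \int_0^{\|T\|} \lambda \, d\mu_x(\lambda), \qquad \langle T^p x, x \rangle = \int_0^{\|T\|} \lambda^p \, d\mu_x(\lambda).
\]
Since the function $\varphi(t) = t^p$ is convex on $[0, \infty)$ for every $p \geq 1$, Jensen's inequality applied to the probability measure $\mu_x$ yields
\[
\varphi\!\left(\int \lambda \, d\mu_x(\lambda)\right) \leq \int \varphi(\lambda) \, d\mu_x(\lambda),
\]
which is exactly $\langle Tx, x \rangle^p \leq \langle T^p x, x \rangle$.

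There is no real obstacle in this argument; the only conceptual point is noting that the condition $\|x\| = 1$ is exactly what promotes the spectral measure to a probability measure, enabling Jensen. If one wishes to avoid the full spectral machinery, an alternative is to handle $p = 2$ directly via the Cauchy--Schwarz inequality $\langle Tx, x \rangle = \langle T^{1/2}x, T^{1/2}x \rangle \leq \|T^{1/2}x\| \cdot \|T^{1/2}x\| \cdot \|x\|^{-1} \cdot \ldots$ -- more cleanly, $\langle Tx, x \rangle^2 = \langle T^{1/2}(T^{1/2}x), x \rangle^2 \leq \langle Tx, x \rangle \cdot \|x\|^2$ which is not quite the right shape, so iterate on dyadic $p = 2^n$ and then interpolate. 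The spectral/Jensen route is both the shortest and the most transparent.
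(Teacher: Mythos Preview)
Your spectral-theorem-plus-Jensen argument is correct and is the standard proof of this inequality (often called McCarthy's inequality or the scalar case of Jensen's operator inequality). Note, however, that the paper does not supply its own proof of this lemma: it is stated with a citation to Kittaneh \cite{K88} and used as a black box. So there is no in-paper proof to compare against; your argument simply fills in the cited result. The final paragraph sketching a dyadic/Cauchy--Schwarz alternative is muddled and does not actually produce the inequality as written, but since it is offered only as an aside and your main Jensen argument is complete, this does not affect correctness.
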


\begin{lemma}$($\cite[Th. 2.1]{SBBP}$)$\label{norm}
 Let $T \in \mathcal{B}({\mathcal{H}})$. Then the following inequalities hold:
\[ w(T) \leq  \|T\|_{\alpha} \leq \sqrt{4-3\alpha } ~w(T), \]
\[ \max \left \{\frac{1}{2},\sqrt{(1-\alpha)}\right \}\|T\| \leq \|T\|_{\alpha} \leq ~\|T\|. \]
\end{lemma}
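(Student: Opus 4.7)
The plan is to establish each of the four inequalities in Lemma \ref{norm} by direct manipulation of the supremum defining $\|T\|_\alpha$, combined with the classical bounds $\tfrac{1}{2}\|T\|\le w(T)\le \|T\|$ and the Cauchy--Schwarz inequality $|\langle Tx,x\rangle|\le \|Tx\|$ for unit vectors $x$. Each bound reduces to a one-line pointwise estimate followed by taking the supremum, so no deep machinery should be needed.

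First I would handle the right-hand inequality of the second chain, $\|T\|_\alpha\le \|T\|$. For a unit vector $x$, Cauchy--Schwarz gives $|\langle Tx,x\rangle|^2\le \|Tx\|^2$, so
\[
\alpha|\langle Tx,x\rangle|^2+(1-\alpha)\|Tx\|^2 \le \alpha\|Tx\|^2+(1-\alpha)\|Tx\|^2=\|Tx\|^2\le \|T\|^2,
\]
and taking the supremum yields the claim. The same Cauchy--Schwarz step, applied in the opposite direction (replacing $\|Tx\|^2$ below by $|\langle Tx,x\rangle|^2$), gives the lower bound $w(T)\le \|T\|_\alpha$ in the first chain.

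Next I would derive the two remaining lower bounds for $\|T\|_\alpha$ against $\|T\|$. Dropping the non-negative term $\alpha|\langle Tx,x\rangle|^2$ yields
\[
\alpha|\langle Tx,x\rangle|^2+(1-\alpha)\|Tx\|^2 \ge (1-\alpha)\|Tx\|^2,
\]
so taking the supremum gives $\sqrt{1-\alpha}\,\|T\|\le \|T\|_\alpha$. For the factor $\tfrac12$, I would combine the already-established inequality $w(T)\le \|T\|_\alpha$ with the classical bound $\tfrac12\|T\|\le w(T)$, yielding $\tfrac12\|T\|\le \|T\|_\alpha$. Taking the larger of these two constants gives $\max\{\tfrac12,\sqrt{1-\alpha}\}\|T\|\le \|T\|_\alpha$.

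The only remaining bound is the upper bound $\|T\|_\alpha\le \sqrt{4-3\alpha}\,w(T)$ of the first chain, which I expect to be the least immediate of the four (though still short). For a unit vector $x$, I would estimate $|\langle Tx,x\rangle|^2\le w(T)^2$ and $\|Tx\|^2\le \|T\|^2\le 4w(T)^2$ using $\|T\|\le 2w(T)$, so
\[
\alpha|\langle Tx,x\rangle|^2+(1-\alpha)\|Tx\|^2\le \alpha w(T)^2+4(1-\alpha)w(T)^2=(4-3\alpha)w(T)^2.
\]
Taking square roots and passing to the supremum closes the proof. None of the steps involves the auxiliary Lemmas \ref{lem}, \ref{lem9}, \ref{lem5}; the main (mild) obstacle is simply noticing the substitution $\|T\|^2\le 4w(T)^2$ that produces the constant $4-3\alpha$.
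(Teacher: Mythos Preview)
Your proof is correct. Each of the four inequalities is derived exactly as one would expect: Cauchy--Schwarz for the two outer bounds, dropping a non-negative term for $\sqrt{1-\alpha}\,\|T\|\le\|T\|_\alpha$, the classical $\tfrac12\|T\|\le w(T)$ for the other lower constant, and the substitution $\|T\|^2\le 4w(T)^2$ for the $\sqrt{4-3\alpha}$ upper bound.

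There is nothing in the present paper to compare your argument against: Lemma~\ref{norm} is quoted verbatim from \cite[Th.~2.1]{SBBP} and no proof is reproduced here. Your approach is the natural one and is, in fact, essentially the argument given in the cited source.
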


Now we are in a position to prove the following inequality.

\begin{theorem}\label{est6}
Let $\mathcal{H}_1,\mathcal{H}_2,\ldots,\mathcal{H}_n$ be Hilbert spaces. Let $T=(T_{ij})$ be an $n\times n$ operator matrix, where $T_{ij}\in \mathcal{B}(\mathcal{H}_j,\mathcal{H}_i)$. Then 
$$ \|T\|_{\alpha} \leq \sqrt{ \left \|\alpha |R|^2+(1-\alpha) |S|^{2}\right \|} ,$$
where $R=(r_{ij})_{n \times n}$, $ r_{ij}=\begin{cases}
		w(T_{ij}) &\text{if }~i=j \\
		\frac{1}{2}(\|T_{ij}\|+\|T_{ji}\|) &\text{if} ~i\neq j  
		\end{cases}	$\\
		 and $S=(s_{ij})_{n \times n}$, $s_{ij} =\|T_{ij}\|$.
\end{theorem}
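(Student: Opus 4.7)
The plan is to reduce the estimate to a finite-dimensional problem by passing to the ``norm vector'' $\tilde x = (\|x_1\|,\ldots,\|x_n\|)\in\mathbb{R}^n$ associated with a unit vector $x = (x_1,\ldots,x_n)\in \oplus_{i=1}^n \mathcal{H}_i$. Since $\sum_i\|x_i\|^2 = 1$, $\tilde x$ is itself a unit vector in $\mathbb{R}^n$. The two quantities we have to control are $|\langle Tx,x\rangle|^2$ (the numerical-range part) and $\|Tx\|^2$ (the norm part), and the strategy is to dominate them respectively by $\langle |R|^2\tilde x,\tilde x\rangle$ and $\langle |S|^2\tilde x,\tilde x\rangle$.

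For the first, expand
\[
\langle Tx,x\rangle \;=\; \sum_{i}\langle T_{ii}x_i,x_i\rangle + \sum_{i\ne j}\langle T_{ij}x_j,x_i\rangle.
\]
Use $|\langle T_{ii}x_i,x_i\rangle|\le w(T_{ii})\|x_i\|^2$ for the diagonal and $|\langle T_{ij}x_j,x_i\rangle|\le \|T_{ij}\|\|x_i\|\|x_j\|$ for the off-diagonal, then pair the $(i,j)$ with the $(j,i)$ terms to produce the coefficient $\frac12(\|T_{ij}\|+\|T_{ji}\|) = r_{ij}$. This yields
\[
|\langle Tx,x\rangle| \;\le\; \sum_{i,j} r_{ij}\|x_i\|\|x_j\| \;=\; \langle R\tilde x,\tilde x\rangle.
\]
Since $R$ is a real symmetric matrix, applying Lemma \ref{lem9} to $R$ and then Lemma \ref{lem5} (with $p=2$) to $|R|$ gives $|\langle R\tilde x,\tilde x\rangle|^{2}\le \langle |R|\tilde x,\tilde x\rangle^{2}\le \langle |R|^{2}\tilde x,\tilde x\rangle$; equivalently one can simply invoke Cauchy-Schwarz in the form $|\langle R\tilde x,\tilde x\rangle|^{2}\le \|R\tilde x\|^{2} = \langle |R|^{2}\tilde x,\tilde x\rangle$.

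For the second quantity, the triangle inequality in each $\mathcal{H}_i$ gives
\[
\|Tx\|^{2} \;=\; \sum_i\Big\|\sum_j T_{ij}x_j\Big\|^{2} \;\le\; \sum_i\Big(\sum_j \|T_{ij}\|\|x_j\|\Big)^{2} \;=\; \|S\tilde x\|^{2} \;=\; \langle |S|^{2}\tilde x,\tilde x\rangle.
\]

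Combining the two estimates,
\[
\alpha|\langle Tx,x\rangle|^{2} + (1-\alpha)\|Tx\|^{2} \;\le\; \bigl\langle\bigl(\alpha|R|^{2}+(1-\alpha)|S|^{2}\bigr)\tilde x,\tilde x\bigr\rangle \;\le\; \bigl\|\alpha|R|^{2}+(1-\alpha)|S|^{2}\bigr\|,
\]
and taking the supremum over all unit $x$ gives the desired bound. The only subtlety I anticipate is the book-keeping in the pairing step that produces the symmetric coefficient $\frac12(\|T_{ij}\|+\|T_{ji}\|)$, which is essential so that $R$ can be taken symmetric (and hence $|R|^{2}=R^{2}$ is a genuine quadratic form on $\mathbb{R}^{n}$); everything else is a direct application of the cited lemmas.
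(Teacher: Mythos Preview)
Your proof is correct and follows essentially the same route as the paper: pass to the norm vector $\tilde x$, bound $|\langle Tx,x\rangle|$ by $\langle R\tilde x,\tilde x\rangle$ (the paper writes this as $\langle Re(\tilde T)\tilde x,\tilde x\rangle$, which is exactly $R$), square via Lemma~\ref{lem9} and Lemma~\ref{lem5} (or, as you note, Cauchy--Schwarz), bound $\|Tx\|^2$ by $\langle |S|^2\tilde x,\tilde x\rangle$, and combine. The only cosmetic differences are that you symmetrize the off-diagonal terms directly rather than invoking $Re(\tilde T)$, and you estimate $\|Tx\|^2$ via the componentwise triangle inequality rather than expanding $\langle Tx,Tx\rangle$ term by term; both yield the same bounds.
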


\begin{proof}
Let $x=(x_1,x_2,...,x_n) \in \oplus_{i=1}^n \mathcal{H}_i$ with $\|x\|=1$ and let $\tilde{x}=(\|x_1\|,\|x_2\|,...,\|x_n\|)$. Clearly, $\tilde{x}$ is a unit vector in $\mathbb{C}^n$. Now,
\begin{eqnarray*}
|\langle Tx,x \rangle| &=& \left |\sum_{i,j=1}^{n} \langle T_{ij}x_j,x_i \rangle \right |\\
&\leq& \sum_{i,j=1}^{n} |\langle T_{ij} x_j,x_i \rangle | \\
&\leq& \sum_{i=1}^{n} |\langle T_{ii} x_i,x_i \rangle | +\sum_{i,j=1 ;i\neq j}^{n}|\langle T_{ij} x_j,x_i \rangle |\\
&\leq& \sum_{i=1}^{n} w(T_{ii}) \|x_i\|^2 +\sum_{i,j=1 ;i\neq j}^{n} \|T_{ij}\| \|x_j\|\|x_i\| \\
&=& \sum_{i,j=1}^{n} \tilde{t_{ij}} \|x_j\|\|x_i\| \\
&=& \langle \tilde{T} \tilde{x} , \tilde{x} \rangle\\
&=& \langle Re(\tilde{T}) \tilde{x},\tilde{x} \rangle  + i\langle  Im(\tilde{T})\tilde{x},\tilde{x}  \rangle,
\end{eqnarray*}
\noindent where $\tilde{T}=(\tilde{t_{ij}})$, $\tilde{t_{ij}}=\begin{cases}
		w(T_{ij}) &\text{if} ~i=j \\
		\|T_{ij}\| &\text{if} ~i\neq j.  \\
		\end{cases}$ \\
		 Clearly, $\langle  Im(\tilde{T})\tilde{x},\tilde{x}  \rangle=0$. So by using Lemma \ref{lem9} and Lemma \ref{lem5}, we get
\begin{eqnarray*}
|\langle Tx,x \rangle| &\leq& \langle Re(\tilde{T}) \tilde{x},\tilde{x}  \rangle\leq  \langle |Re(\tilde{T}) |\tilde{x},\tilde{x}  \rangle	\\
\Rightarrow 	|\langle Tx,x \rangle|^2 &\leq&  \langle |Re(\tilde{T}) |\tilde{x},\tilde{x}  \rangle ^2 \leq  \langle |Re(\tilde{T}) |^2\tilde{x},\tilde{x}  \rangle 	=\langle |R|^2\tilde{x},\tilde{x}  \rangle.
\end{eqnarray*}

Also,  
\begin{eqnarray*}
\|Tx\|^2 &=& |\langle Tx,Tx \rangle |\\
&=& \left |\sum_{i,j,k=1}^{n} \langle T_{kj} x_j,T_{ki} x_i  \rangle \right |\\
&\leq& \sum_{i,j,k=1}^{n} |\langle T_{kj}x_j,T_{ki} x_i  \rangle |\\
&\leq& \sum_{i,j,k=1}^{n} |\langle T_{ki}^*T_{kj}x_j, x_i  \rangle |\\
&\leq& \sum_{i,j,k=1}^{n} \|T_{ki}\|\|T_{kj}\|\|x_j\|\|x_i\|\\
&=& \langle |S|^{2} \tilde{x},\tilde{x}  \rangle .
\end{eqnarray*}
Therefore, 
\begin{eqnarray*}
  \alpha |\langle Tx ,x \rangle |^2+(1-\alpha) \|Tx\|^2  &\leq& \alpha \langle |R|^2 \tilde{x},\tilde{x}  \rangle + (1-\alpha) \langle |S|^{2} \tilde{x},\tilde{x}  \rangle \\
&=&  \langle \left (\alpha |R|^2+(1-\alpha) |S|^{2}   \right)  \tilde{x},\tilde{x} \rangle \\
&\leq & \left \| \alpha |R|^2+(1-\alpha) |S|^{2}  \right \|.
\end{eqnarray*}
Taking supremum over all unit vectors in $\oplus_{i=1}^n \mathcal{H}_i$, we get the desired inequality.
\end{proof}

As a consequence of  Theorem \ref{est6}, the following numerical radius inequality and the usual operator norm inequality can be proved quite easily. 

\begin{cor}\label{est6_c1}
Let $\mathcal{H}_1,\mathcal{H}_2,\ldots,\mathcal{H}_n$ be Hilbert spaces. Let $T=(T_{ij})$ be an $n\times n$ operator matrix, where $T_{ij}\in \mathcal{B}(\mathcal{H}_j,\mathcal{H}_i)$. Then
\begin{eqnarray*}
(i)~~w(T) &\leq& \min_{0\leq \alpha\leq 1} \sqrt{\left \|\alpha |R|^2+(1-\alpha) |S|^{2}\right \|} \leq w(\tilde{T}) \\
(ii)~~\|T\| &\leq& \min_{0\leq \alpha\leq 1} \frac{1}{\max \left \{\frac{1}{2},\sqrt{1-\alpha}\right \}} \sqrt{\left \|\alpha |R|^2+(1-\alpha)|S|^{2}\right \|} \leq \|S\|,
\end{eqnarray*}
 where $\tilde{T}=(\tilde{t_{ij}})_{n \times n}$, $ \tilde{t_{ij}}=\begin{cases}
		w(T_{ij}) &\text{if }~i=j \\
		\|T_{ij}\| &\text{if }~i\neq j  
		\end{cases}	$ \\
		and $R,S$ are same as described in Theorem \ref{est6}.
\end{cor}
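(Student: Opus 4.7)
The plan is to derive the corollary essentially by combining Theorem \ref{est6} with the two-sided comparison between $\|T\|_{\alpha}$ and the quantities $w(T)$, $\|T\|$ furnished by Lemma \ref{norm}, and then optimizing over $\alpha \in [0,1]$. Both parts have the same skeleton: the left inequalities in $(i)$ and $(ii)$ are the content of Lemma \ref{norm} plugged into Theorem \ref{est6} and taking a minimum; the right inequalities are obtained by evaluating the expression at a specific endpoint of $[0,1]$.

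For the left halves, I would start from Theorem \ref{est6}, which gives
\[
\|T\|_{\alpha} \leq \sqrt{\bigl\|\alpha|R|^2 + (1-\alpha)|S|^2\bigr\|}
\]
for every $\alpha \in [0,1]$. Invoking the two inequalities of Lemma \ref{norm}, namely $w(T) \leq \|T\|_{\alpha}$ and $\max\{\tfrac12,\sqrt{1-\alpha}\}\,\|T\| \leq \|T\|_{\alpha}$, and then taking the infimum over $\alpha \in [0,1]$ produces the left inequalities in $(i)$ and $(ii)$ respectively.

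For the right halves, I would choose convenient values of $\alpha$. In $(i)$, plugging in $\alpha = 1$ collapses the expression to $\sqrt{\||R|^2\|} = \|R\|$ (using that $R$ has real, nonnegative entries and hence is self-adjoint). The key identification is that $R$ coincides with $Re(\tilde{T})$: off the diagonal, $R_{ij} = \tfrac12(\|T_{ij}\| + \|T_{ji}\|) = \tfrac12(\tilde{t}_{ij} + \tilde{t}_{ji})$, while on the diagonal $R_{ii} = w(T_{ii}) = \tilde{t}_{ii}$. Since $\tilde{T}$ has nonnegative entries, Lemma \ref{lem} gives $w(\tilde{T}) = \|Re(\tilde{T})\| = \|R\|$, so the value at $\alpha = 1$ is exactly $w(\tilde{T})$, which bounds the minimum from above. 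For $(ii)$, plugging in $\alpha = 0$ gives $\max\{\tfrac12,1\} = 1$ and $\sqrt{\||S|^2\|} = \|S\|$, so the value at $\alpha = 0$ is exactly $\|S\|$.

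I do not anticipate a serious obstacle here, as the corollary is a direct assembly of the preceding results. The only point requiring care is the observation that $R = Re(\tilde{T})$, which is what lets Lemma \ref{lem} convert $\|R\|$ into $w(\tilde{T})$ in part $(i)$; once this identification is recorded, everything reduces to substituting $\alpha \in \{0,1\}$ into the expression supplied by Theorem \ref{est6}.
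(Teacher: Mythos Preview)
Your proposal is correct and follows exactly the approach the paper intends: the paper simply states that the corollary ``can be proved quite easily'' as a consequence of Theorem \ref{est6}, and your argument---combining Theorem \ref{est6} with Lemma \ref{norm}, minimizing over $\alpha$, and then evaluating at $\alpha=1$ (using the identification $R = Re(\tilde{T})$ together with Lemma \ref{lem}) and at $\alpha=0$ for the right-hand bounds---is precisely the natural way to fill in those details.
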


%\begin{proof}
%(i). The first inequality in (i) follows from Theorem \ref{est6}, by using the inequality $w(T) \leq \frac{1}{\sqrt{\alpha+\beta}}\|T\|_{\alpha,\beta}$, (see Lemma \ref{norm}) and $\alpha+\beta=1$ with $0\leq \alpha \leq 1$. The second inequality in (i) follows from the case $\alpha =1$ and by using Lemma \ref{lem}.\\
%(ii). The first inequality in (ii) follows from Theorem \ref{est6}, by using the inequality $\|T\| \leq \frac{1}{\max \left \{\frac{\sqrt{\alpha+\beta}}{2},\sqrt{\beta}\right \}} \|T\|_{\alpha,\beta}$ (see Lemma \ref{norm} )  and $\alpha+\beta=1$ with $0\leq \alpha \leq 1$. The second inequality in (ii) follows from the case $\alpha =0$.\\
%\end{proof}

%We would like to note that the numerical radius inequality and the usual operator norm inequality in Corollary \ref{est6_c1} improve on the corresponding existing inequalities obtained by Abu-Omar and Kittaneh \cite[Th. 1]{OK} and Hou and Du \cite[Th. 1.1]{HD}, respectively.

We would like to note that the inequalities in \cite[Th. 1]{OK} and \cite[Th. 1.1]{HD} follow from (i) and (ii) of Corollary \ref{est6_c1}, respectively.\\

In our next result, we obtain an upper bound for the $\alpha$-norm of $n \times n$ operator matrices in terms of non-negative continuous functions on $[0,\infty)$. First we need the following lemma.

\begin{lemma}	$($\cite[Th. 5]{K88}$)$  \label{lem7}
Let $T\in \mathcal{B}(\mathcal{H})$ and let $f$ and $g$ be two non-negative continuous functions on $[0,\infty)$ such that $f(t)g(t)=t,~~\forall~~ t\in [0,\infty).$ Then
$$|\langle Tx,y \rangle|\leq \|f(|T|)x\| \|g(|T^*|)y\|, ~~\forall~~x,y\in \mathcal{H}.$$
	\end{lemma}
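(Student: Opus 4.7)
The plan is to invoke the polar decomposition $T=U|T|$, where $U\in\mathcal{B}(\mathcal{H})$ is the partial isometry with $\ker U=\ker|T|$, and then exploit the hypothesis $f(t)g(t)=t$ via the continuous functional calculus. Since $f(|T|)$ and $g(|T|)$ are commuting self-adjoint operators with $f(|T|)g(|T|)=|T|$, for any $x,y\in\mathcal{H}$ we have
\begin{align*}
\langle Tx,y\rangle=\langle Ug(|T|)f(|T|)x,y\rangle=\langle f(|T|)x,\,g(|T|)U^*y\rangle,
\end{align*}
and a direct application of the Cauchy--Schwarz inequality yields
$$|\langle Tx,y\rangle|\leq \|f(|T|)x\|\,\|g(|T|)U^*y\|.$$
The lemma will thus follow once the right-hand factor is bounded by $\|g(|T^*|)y\|$.

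To handle this, I would first establish the standard intertwining $|T^*|=U|T|U^*$ by comparing squares: $(U|T|U^*)^2=U|T|(U^*U)|T|U^*=U|T|^2U^*=TT^*=|T^*|^2$, using that $U^*U$ is the orthogonal projection onto $\overline{R(|T|)}$ and hence fixes the range of $|T|$. Uniqueness of the positive square root yields the claim, and an induction based on $U^*U|T|=|T|$ extends this to $U|T|^nU^*=(U|T|U^*)^n$ for all $n\geq 1$. Approximating uniformly on $\sigma(|T|)\cup\sigma(|T^*|)\subset[0,\|T\|]$ by polynomials vanishing at $0$ through the continuous functional calculus then gives the intertwining relation $h(|T^*|)=Uh(|T|)U^*$ whenever $h(0)=0$.

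Since $f(t)g(t)=t$ forces $f(0)g(0)=0$, either $g(0)=0$, in which case the intertwining applies directly to $h=g$ and yields the clean identity $\|g(|T|)U^*y\|=\|g(|T^*|)y\|$, or $g(0)\neq 0$ (so $f(0)=0$); in the latter case, decomposing $y=y_1+y_2$ with $y_1\in\overline{R(T)}$ and $y_2\in\ker T^*=\ker U^*$ shows that $U^*y=U^*y_1$, while $g(|T^*|)y=Ug(|T|)U^*y_1+g(0)y_2$ splits orthogonally, and one still obtains $\|g(|T|)U^*y\|\leq\|g(|T^*|)y\|$ using that $U$ is isometric on its initial space. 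The main obstacle is precisely this final bookkeeping around the value $g(0)$: the identity $g(|T^*|)=Ug(|T|)U^*$ is valid without modification only when $g(0)=0$, so restoring the general case requires tracking the contribution from $\ker T^*$ carefully. Once this is done, combining with the Cauchy--Schwarz bound above completes the proof.
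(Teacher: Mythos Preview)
The paper does not supply its own proof of this lemma: it is quoted verbatim as \cite[Th.~5]{K88} and used as a black box. So there is no argument in the paper to compare against.

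That said, your proof is correct and is essentially the classical argument from Kittaneh's original paper. The polar decomposition $T=U|T|$ together with $f(|T|)g(|T|)=|T|$ and Cauchy--Schwarz immediately give $|\langle Tx,y\rangle|\le \|f(|T|)x\|\,\|g(|T|)U^*y\|$, and the intertwining $U|T|U^*=|T^*|$ (extended to continuous functions vanishing at $0$ via polynomial approximation) is exactly the right tool to convert the second factor into $\|g(|T^*|)y\|$. Your handling of the case $g(0)\neq 0$ is also sound: since $U^*y_1\in\overline{R(|T|)}$ and $g(|T|)$ leaves $\overline{R(|T|)}$ invariant, the vector $g(|T|)U^*y_1$ lies in the initial space of $U$, so $\|Ug(|T|)U^*y_1\|=\|g(|T|)U^*y_1\|$, and the orthogonal contribution $g(0)y_2$ only increases the norm of $g(|T^*|)y$. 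One could streamline this case split by noting that $\|g(|T|)U^*y\|^2=\langle U g^2(|T|)U^*y,y\rangle=\langle (g^2(|T^*|)-g(0)^2 P_{\ker T^*})y,y\rangle\le \|g(|T^*|)y\|^2$, but your decomposition achieves the same end.
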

	
\begin{theorem}\label{est7}
Let $T=(T_{ij})$ be an $n\times n$ operator matrix, where $T_{ij}\in \mathcal{B}(\mathcal{H})$. Let $f$ and $g$ be two non-negative continuous functions on $[0,\infty)$ such that $f(t)g(t)=t,$ $\forall ~t \geq 0$. Then
\begin{eqnarray*}
\|T\|_{\alpha} &\leq& \sqrt{\left \|\alpha |R|^2+(1-\alpha) |S|^{2}\right \|}.
\end{eqnarray*}
where $R=(r_{ij})_{n \times n}$, $ r_{ij}=\frac{1}{2}\left (\|f^2(|T_{ij}|)\|^{\frac{1}{2}} \|g^2(|T_{ij}^*|)\|^{\frac{1}{2}}+\|f^2(|T_{ji}|)\|^{\frac{1}{2}} \|g^2(|T_{ji}^*|)\|^{\frac{1}{2}} \right )$ and $S=(s_{ij})_{n \times n}$, $s_{ij} =\|T_{ij}\|$.
\end{theorem}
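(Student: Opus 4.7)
The plan is to follow the skeleton of the proof of Theorem \ref{est6}, substituting Kittaneh's mixed Schwarz inequality (Lemma \ref{lem7}) in place of the crude bound $|\langle T_{ij}x_j,x_i\rangle|\le\|T_{ij}\|\|x_j\|\|x_i\|$ that was used for the off-diagonal entries, and simultaneously replacing the numerical radius bound used on the diagonal. This naturally produces asymmetric weights $a_{ij}=\|f^2(|T_{ij}|)\|^{1/2}\|g^2(|T_{ij}^*|)\|^{1/2}$, which a symmetrization step will turn into the entries of $R$. The bound on $\|Tx\|^2$ will be imported verbatim from the proof of Theorem \ref{est6}, as it only relies on the plain Cauchy-Schwarz inequality and on the operator norms $\|T_{ij}\|$ that already appear in the definition of $S$.

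Fix a unit vector $x=(x_1,\ldots,x_n)\in\oplus_{i=1}^n\mathcal H$ and set $\tilde x=(\|x_1\|,\ldots,\|x_n\|)\in\mathbb C^n$, which is a unit vector. By Lemma \ref{lem7} and the identity $\|f(|T_{ij}|)\|^2=\|f^2(|T_{ij}|)\|$, for every pair $(i,j)$,
\[
|\langle T_{ij}x_j,x_i\rangle|\le\|f(|T_{ij}|)x_j\|\|g(|T_{ij}^*|)x_i\|\le a_{ij}\|x_i\|\|x_j\|.
\]
Summing gives $|\langle Tx,x\rangle|\le\langle A\tilde x,\tilde x\rangle$ with $A=(a_{ij})$. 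Since $A$ has real entries and $\tilde x$ is real, $\langle A\tilde x,\tilde x\rangle=\langle \tfrac{A+A^T}{2}\tilde x,\tilde x\rangle=\langle R\tilde x,\tilde x\rangle$, where $R=(A+A^T)/2$ has precisely the entries listed in the statement. The matrix $R$ is self-adjoint, so Lemma \ref{lem9} yields $\langle R\tilde x,\tilde x\rangle\le\langle|R|\tilde x,\tilde x\rangle$, and because $|R|\ge 0$, Lemma \ref{lem5} with $p=2$ then gives
\[
|\langle Tx,x\rangle|^2\le\langle|R|\tilde x,\tilde x\rangle^2\le\langle|R|^2\tilde x,\tilde x\rangle.
\]

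For the second ingredient I would reproduce the estimate of $\|Tx\|^2$ from the proof of Theorem \ref{est6}, i.e.\ using $\langle T_{kj}x_j,T_{ki}x_i\rangle=\langle T_{ki}^*T_{kj}x_j,x_i\rangle$ and Cauchy-Schwarz, which unchanged yields $\|Tx\|^2\le\langle|S|^2\tilde x,\tilde x\rangle$. Weighting and combining,
\[
\alpha|\langle Tx,x\rangle|^2+(1-\alpha)\|Tx\|^2\le\langle(\alpha|R|^2+(1-\alpha)|S|^2)\tilde x,\tilde x\rangle\le\|\alpha|R|^2+(1-\alpha)|S|^2\|,
\]
and taking the supremum over unit $x\in\oplus_{i=1}^n\mathcal H$ completes the proof.

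The only conceptually new step is the symmetrization replacing $A$ by $R$, which is essentially the same trick used in Theorem \ref{est6} where $\tilde T$ was symmetrized to $Re(\tilde T)$; the real-valuedness of $\tilde x$ makes this passage automatic. I do not expect a significant obstacle beyond careful bookkeeping of norms and ensuring the spectral calculus step $\|f(|T_{ij}|)\|=\|f^2(|T_{ij}|)\|^{1/2}$ is applied correctly.
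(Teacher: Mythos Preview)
Your proposal is correct and follows essentially the same approach as the paper's proof: apply Lemma~\ref{lem7} to each $\langle T_{ij}x_j,x_i\rangle$, collect the resulting weights into a real matrix, symmetrize (your $(A+A^T)/2$ is exactly the paper's $Re(\tilde T)$), then invoke Lemmas~\ref{lem9} and~\ref{lem5} for the $|\langle Tx,x\rangle|^2$ bound and reuse the $\|Tx\|^2\le\langle|S|^2\tilde x,\tilde x\rangle$ estimate from Theorem~\ref{est6}. The only cosmetic difference is that the paper writes the intermediate step as $\langle f^2(|T_{ij}|)x_j,x_j\rangle^{1/2}\le\|f^2(|T_{ij}|)\|^{1/2}\|x_j\|$ rather than passing through $\|f(|T_{ij}|)\|=\|f^2(|T_{ij}|)\|^{1/2}$, but these are equivalent.
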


\begin{proof}
Let $x=(x_1,x_2,...,x_n) \in \oplus_{i=1}^n \mathcal{H}$ with $\|x\|=1$ and let $\tilde{x}=(\|x_1\|,\|x_2\|,...,\|x_n\|)$. Clearly, $\tilde{x}$ is a unit vector in $\mathbb{C}^n$. Using Lemma \ref{lem7}, we get that
\begin{eqnarray*}
|\langle Tx,x \rangle| &=& \left |\sum_{i,j=1}^{n} \langle T_{ij}x_j,x_i \rangle \right |\\
&\leq& \sum_{i,j=1}^{n} |\langle T_{ij} x_j,x_i \rangle | \\
&\leq& \sum_{i,j=1}^{n} \|f(|T_{ij}|)x_j\|\|g(|T_{ij}^*|)x_i\|\\
&=& \sum_{i,j=1}^{n} \langle f^2(|T_{ij}|)x_j,x_j \rangle ^{\frac{1}{2}}  \langle g^2(|T_{ij}^*|)x_i,x_i  \rangle ^{\frac{1}{2}}\\
&\leq& \sum_{i,j=1}^{n} \|f^2(|T_{ij}|)\|^{\frac{1}{2}} \|g^2(|T_{ij}^*|)\|^{\frac{1}{2}} \|x_i\|\|x_j\| \\
&=& \sum_{i,j=1}^{n} \tilde{t_{ij}} \|x_j\|\|x_i\| \\
&=& \langle \tilde{T} \tilde{x} , \tilde{x} \rangle\\
&=& \langle Re(\tilde{T}) \tilde{x},\tilde{x} \rangle  +i\langle  Im(\tilde{T})\tilde{x},\tilde{x}  \rangle,
\end{eqnarray*}
\noindent where $\tilde{T}=(\tilde{t_{ij}})$, $\tilde{t_{ij}}=\|f^2(|T_{ij}|)\|^{\frac{1}{2}} \|g^2(|T_{ij}^*|)\|^{\frac{1}{2}}.$\\
 Proceeding similarly as in the proof of Theorem \ref{est6}, we get 
\[ |\langle Tx,x \rangle|^2 \leq  \langle |R|^2\tilde{x},\tilde{x}  \rangle  ~~\mbox{and}~~\|Tx\|^2  \leq \langle |S|^{2} \tilde{x},\tilde{x}  \rangle.	\]
Therefore,
\begin{eqnarray*}
 \alpha |\langle Tx ,x \rangle |^2+(1-\alpha) \|Tx\|^2  &\leq & \left \| \alpha |R|^2+(1-\alpha) |S|^{2}  \right \|.
\end{eqnarray*}
Taking supremum over all unit vectors in $\oplus_{i=1}^n \mathcal{H}$, we get the desired inequality.
\end{proof}
 
The following numerical radius inequality is an easy consequence of Theorem \ref{est7} .

\begin{cor}\label{est7_c1}
Let $T=(T_{ij})$ be an $n\times n$ operator matrix, where $T_{ij}\in \mathcal{B}(\mathcal{H})$. Let $f$ and $g$ be non-negative continuous functions on $[0,\infty)$ such that $f(t)g(t)=t,$ $\forall ~t \geq 0$. Then
\begin{eqnarray*}
w(T) &\leq& \min_{0\leq \alpha \leq 1}\sqrt{ \left \|\alpha |R|^2+(1-\alpha)|S|^{2}\right \|}\leq w(\tilde{T}),
\end{eqnarray*}
where $R,S$ are same as described in Theorem \ref{est7} and  $\tilde{T} =(\tilde{t_{ij}})_{n\times n}$, $\tilde{t_{ij}}=\|f^2(|T_{ij}|)\|^{\frac{1}{2}} \|g^2(|T_{ij}^*|)\|^{\frac{1}{2}}.$ 
\end{cor}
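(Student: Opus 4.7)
The plan is to derive both inequalities by combining Theorem~\ref{est7} with the general bound $w(T)\le \|T\|_{\alpha}$ from Lemma~\ref{norm}, and then to pin down the case $\alpha=1$ to recover $w(\tilde T)$ on the right.

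For the first inequality, I would invoke Lemma~\ref{norm} to get $w(T)\le \|T\|_{\alpha}$ for every $\alpha\in[0,1]$, and then apply Theorem~\ref{est7} to conclude
\[
w(T)\le \|T\|_{\alpha}\le \sqrt{\bigl\|\alpha|R|^2+(1-\alpha)|S|^2\bigr\|}
\]
for each $\alpha\in[0,1]$. Taking the infimum over $\alpha$ on the right-hand side preserves the inequality and produces the left-hand bound of the corollary.

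For the second inequality, the key observation is that $R=\mathrm{Re}(\tilde T)$. Indeed, each $\tilde t_{ij}=\|f^2(|T_{ij}|)\|^{1/2}\|g^2(|T_{ij}^*|)\|^{1/2}$ is a non-negative real scalar, so the $(i,j)$ entry of $\mathrm{Re}(\tilde T)$ is $\tfrac{1}{2}(\tilde t_{ij}+\tilde t_{ji})=R_{ij}$. Since $\tilde T$ has non-negative entries, Lemma~\ref{lem} applies and yields $w(\tilde T)=\|\mathrm{Re}(\tilde T)\|=\|R\|$. Moreover, $R$ is a real symmetric matrix, hence self-adjoint, so $\||R|^2\|=\|R\|^2$. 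Specializing the middle expression of the corollary to $\alpha=1$ therefore gives
\[
\sqrt{\bigl\|\,|R|^2\,\bigr\|}=\|R\|=w(\tilde T),
\]
which shows in particular that the minimum over $\alpha\in[0,1]$ is bounded above by $w(\tilde T)$.

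The proof is essentially an assembly of earlier results, and I do not expect any technical obstacle. The single conceptual step is the identification $R=\mathrm{Re}(\tilde T)$, which is what makes Lemma~\ref{lem} applicable and simultaneously explains why the right-hand bound is attained precisely at $\alpha=1$.
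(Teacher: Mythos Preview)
Your proposal is correct and follows exactly the approach implicit in the paper: the first inequality comes from Lemma~\ref{norm} together with Theorem~\ref{est7}, and the second from specializing to $\alpha=1$ and invoking Lemma~\ref{lem} via the identification $R=\mathrm{Re}(\tilde T)$ (this identification is precisely what appears in the proof of Theorem~\ref{est7}, and the same $\alpha=1$ trick is spelled out in the Remark following Corollary~\ref{est9_c1}). One small comment: the identity $\|\,|R|^2\,\|=\|R\|^2$ holds for any bounded operator since $|R|^2=R^*R$, so you do not need $R$ to be self-adjoint for that step.
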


%\begin{proof}
%By using Lemma \ref{norm} in Theorem \ref{est7} and $\alpha+\beta=1$ with $0\leq \alpha \leq 1$, we get \[ w(T) \leq  \left \|\alpha |R|^2+(1-\alpha) |S|^{2}\right \|^{\frac{1}{2}}.\] Since this holds for all $\alpha\in [0,1]$, taking minimum over all $\alpha \in [0,1]$, we get the first inequality. The remaining inequality follows from the case $\alpha =1$ and Lemma \ref{lem}.
%\end{proof}

%We would like to remark that the inequality in Corollary \ref{est7_c1} is sharper than the existing inequality $w(T) \leq \|Re(\tilde{T})\| = w(\tilde{T})$ ($\tilde{T}$ is the same as in Corollary \ref{est7_c1}), obtained by Bhunia and Paul \cite[Th. 3.1]{BP}.\\

We would like to note that the inequality in \cite[Th. 3.1]{BP} follows from Corollary \ref{est7_c1}.\\

In our next theorem, we obtain an upper bound for the $\alpha $-norm of $n\times n$ operator matrices.

\begin{theorem}\label{est8}
Let $T=(T_{ij})$ be an $n\times n$ operator matrix, where $T_{ij}\in \mathcal{B}(\mathcal{H})$. Let $f,g$ be two non-negative continuous functions on $[0,\infty)$ such that $f(t)g(t)=t,$ $\forall ~ t \geq 0 $. If $p \geq 1$, then
\[ \|T\|_{\alpha}^p \leq \sqrt{\left \|\alpha |R|^{2p}+(1-\alpha) |S|^{2p} \right \|},\]
where $ r_{ij}=\begin{cases}
		\frac{1}{2}\left \| f^2(|T_{ii}|)  + g^2(|T_{ii}^*|) \right \| &\text{if} ~i=j \\
		\frac{1}{2} \left (\|f^2(|T_{ij}|)\|^{\frac{1}{2}} \|g^2(|T_{ij}^*|)\|^{\frac{1}{2}}+\|f^2(|T_{ji}|)\|^{\frac{1}{2}} \|g^2(|T_{ji}^*|)\|^{\frac{1}{2}} \right ) &\text{if} ~i\neq j,  \\
		\end{cases}$\\ $R=(r_{ij})_{n \times n}$ and  $S=(s_{ij})_{n \times n}$, $s_{ij} =\|T_{ij}\|$.
\end{theorem}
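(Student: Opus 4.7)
The plan is to adapt the argument used for Theorem \ref{est7}, with two essential twists: the diagonal entries of $R$ now have a different shape (a norm of a sum rather than a product of norms), and we must raise everything to the $2p$-th power before extracting the bound. Fix a unit vector $x=(x_1,\dots,x_n) \in \oplus_{i=1}^n \mathcal{H}$ and its companion $\tilde{x}=(\|x_1\|,\dots,\|x_n\|) \in \mathbb{C}^n$, which is a unit vector. Expanding $\langle Tx,x\rangle=\sum_{i,j}\langle T_{ij}x_j,x_i\rangle$ and applying Lemma \ref{lem7} term by term, I would treat the cases $i\neq j$ and $i=j$ separately.

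For off-diagonal terms, bounding $\|f(|T_{ij}|)x_j\|\leq\|f^2(|T_{ij}|)\|^{1/2}\|x_j\|$ together with the analogous estimate for $g$ gives, after symmetrizing $(i,j)\leftrightarrow(j,i)$, precisely $2 r_{ij}\|x_i\|\|x_j\|$, exactly as in Theorem \ref{est7}. For diagonal terms, I would apply AM--GM instead:
\[
\|f(|T_{ii}|)x_i\|\,\|g(|T_{ii}^*|)x_i\| \leq \tfrac{1}{2}\bigl\langle (f^2(|T_{ii}|)+g^2(|T_{ii}^*|))x_i,\,x_i\bigr\rangle \leq r_{ii}\|x_i\|^2,
\]
which is exactly where the new form of the diagonal $r_{ii}$ enters. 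Assembling these contributions yields $|\langle Tx,x\rangle| \leq \langle R\tilde{x},\tilde{x}\rangle$.

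Next, I lift this scalar inequality to the $2p$-th power. Lemma \ref{lem9} gives $\langle R\tilde{x},\tilde{x}\rangle \leq \langle |R|\tilde{x},\tilde{x}\rangle$, and then two applications of Lemma \ref{lem5} to the positive operator $|R|$ on the unit vector $\tilde{x}$, first with exponent $p\geq 1$ and then with exponent $2$, produce
\[
|\langle Tx,x\rangle|^{2p} \leq \langle |R|^{p}\tilde{x},\tilde{x}\rangle^{2} \leq \langle |R|^{2p}\tilde{x},\tilde{x}\rangle.
\]
The estimate $\|Tx\|^2\leq\langle|S|^2\tilde{x},\tilde{x}\rangle$ proved inside Theorem \ref{est6} (the argument there uses only Cauchy--Schwarz on $\langle T_{ki}^*T_{kj}x_j, x_i\rangle$, which still applies here) gives, via Lemma \ref{lem5} with exponent $p$, the companion bound $\|Tx\|^{2p}\leq\langle|S|^{2p}\tilde{x},\tilde{x}\rangle$.

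Finally, since $p\geq 1$ the map $t\mapsto t^{p}$ is convex, so $(\alpha a+(1-\alpha)b)^p\leq \alpha a^p+(1-\alpha)b^p$ for $a,b\geq 0$. Applying this with $a=|\langle Tx,x\rangle|^2$ and $b=\|Tx\|^2$, and combining with the two displays above, gives
\[
\bigl(\alpha|\langle Tx,x\rangle|^2+(1-\alpha)\|Tx\|^2\bigr)^{p} \leq \bigl\langle\bigl(\alpha|R|^{2p}+(1-\alpha)|S|^{2p}\bigr)\tilde{x},\tilde{x}\bigr\rangle \leq \bigl\|\alpha|R|^{2p}+(1-\alpha)|S|^{2p}\bigr\|.
\]
Taking the supremum over unit vectors in $\oplus_{i=1}^n\mathcal{H}$ and extracting a square root yields the theorem. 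The main obstacle I foresee is aligning the mixed Schwarz output on the diagonal with the prescribed $r_{ii}$: the product $\|f(|T_{ii}|)x_i\|\,\|g(|T_{ii}^*|)x_i\|$ must be converted, via AM--GM, into a single positive-operator inner product so that a single operator norm can be pulled out, which is precisely what the definition $r_{ii}=\tfrac{1}{2}\|f^2(|T_{ii}|)+g^2(|T_{ii}^*|)\|$ is tailored for. Once that alignment is secured, the remainder is careful bookkeeping with Lemmas \ref{lem9} and \ref{lem5} and the convexity step.
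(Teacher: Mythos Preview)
Your proposal is correct and follows essentially the same route as the paper's proof: the paper introduces the auxiliary matrix $\tilde{T}$ and then passes to $R=Re(\tilde{T})$, whereas you symmetrize directly to land on $R$, and the paper applies Lemma~\ref{lem5} once with exponent $2p$ where you apply it twice with exponents $p$ and $2$, but these are cosmetic variations of the same argument.
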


\begin{proof}
Let $x=(x_1,x_2,...,x_n) \in \oplus_{i=1}^n \mathcal{H}$ with $\|x\|=1$ and let $\tilde{x}=(\|x_1\|,\|x_2\|,...,\|x_n\|)$. Clearly, $\tilde{x}$ is a unit vector in $\mathbb{C}^n$. Using Lemma \ref{lem7}, we get that
\begin{eqnarray*}
|\langle Tx, x\rangle | &=& \left |\sum_{i,j=1}^{n} \langle T_{ij}x_j,x_i \rangle \right |\\
&\leq& \sum_{i,j=1}^{n} |\langle T_{ij} x_j,x_i \rangle | \\
&\leq& \sum_{i,j=1}^{n} \|f(|T_{ij}|)x_j\|\|g(|T_{ij}^*|)x_i\|\\
&=& \sum_{i,j=1}^{n} \langle f^2(|T_{ij}|)x_j,x_j \rangle ^{\frac{1}{2}}  \langle g^2(|T_{ij}^*|)x_i,x_i  \rangle ^{\frac{1}{2}}\\
&\leq&  \sum_{i=1}^{n} \frac{1}{2} \left (\langle f^2(|T_{ii}|)x_i,x_i \rangle   + \langle g^2(|T_{ii}^*|)x_i,x_i \rangle  \right ) \\
&&+ \sum_{i,j=1,i\neq j}^{n} \langle f^2(|T_{ij}|)x_j,x_j \rangle ^{\frac{1}{2}}  \langle g^2(|T_{ij}^*|)x_i,x_i  \rangle ^{\frac{1}{2}}\\
&\leq&  \sum_{i=1}^{n} \frac{1}{2} \langle \left ( f^2(|T_{ii}|) +  g^2(|T_{ii}^*|)\right )x_i,x_i \rangle   \\
&&  +\sum_{i,j=1,i\neq j}^{n} \langle f^2(|T_{ij}|)x_j,x_j \rangle ^{\frac{1}{2}}  \langle g^2(|T_{ij}^*|)x_i,x_i  \rangle ^{\frac{1}{2}}\\
&\leq&  \sum_{i=1}^{n}  \frac{1}{2}\left \| f^2(|T_{ii}|)  + g^2(|T_{ii}^*|) \right \| \|x_i\|^2 \\
&&+\sum_{i,j=1,i\neq j}^{n} \|f^2(|T_{ij}|)\|^{\frac{1}{2}} \|g^2(|T_{ij}^*|)\|^{\frac{1}{2}} \|x_i\|\|x_j\| 
\end{eqnarray*}

\begin{eqnarray*}
&=& \sum_{i,j=1}^{n} \tilde{t_{ij}} \|x_j\|\|x_i\| \\
&=& \langle \tilde{T} \tilde{x} , \tilde{x} \rangle\\
&=& \langle Re(\tilde{T}) \tilde{x},\tilde{x} \rangle  +i\langle  Im(\tilde{T})\tilde{x},\tilde{x}  \rangle,
\end{eqnarray*}
\noindent where $\tilde{T}=(\tilde{t_{ij}})_{n \times n}$, $\tilde{t_{ij}}=\begin{cases}
		\frac{1}{2}\left \| f^2(|T_{ii}|)  + g^2(|T_{ii}^*|) \right \| &\text{if} ~i=j \\
		\|f^2(|T_{ij}|)\|^{\frac{1}{2}} \|g^2(|T_{ij}^*|)\|^{\frac{1}{2}} &\text{if} ~i\neq j.  \\
		\end{cases}	$ \\
		Clearly, $\langle  Im(\tilde{T})\tilde{x},\tilde{x}  \rangle=0$, and  so using Lemma \ref{lem9} and Lemma \ref{lem5}, we get that
\begin{eqnarray*}
|\langle Tx,x \rangle| &\leq& \langle Re(\tilde{T}) \tilde{x},\tilde{x}  \rangle	\\
\Rightarrow 	|\langle Tx,x \rangle| &\leq& \langle |Re(\tilde{T}) |\tilde{x},\tilde{x}  \rangle	\\
\Rightarrow 	 |\langle Tx,x \rangle|^{2p} &\leq&  \langle |Re(\tilde{T}) |\tilde{x},\tilde{x}  \rangle ^{2p}	\\
\Rightarrow 	 |\langle Tx,x \rangle|^{2p} &\leq&  \langle |Re(\tilde{T}) |^{2p}\tilde{x},\tilde{x}  \rangle 	\\
\Rightarrow 	 |\langle Tx,x \rangle|^{2p} &\leq&  \langle |R|^{2p}\tilde{x},\tilde{x}  \rangle.\\
\end{eqnarray*}
Now proceeding similarly as in the proof of Theorem \ref{est6} and using Lemma \ref{lem5}, we obtain 
\[ \|Tx\|^{2p}  \leq \langle |S|^{2} \tilde{x},\tilde{x}  \rangle ^p \leq \langle |S|^{2p} \tilde{x},\tilde{x}  \rangle.\]

\noindent By convexity of $t^p, p\geq 1$, it follows that
\begin{eqnarray*}
 \left ( \alpha |\langle Tx ,x \rangle |^2+(1-\alpha)\|Tx\|^2 \right )^p &\leq& \left (\alpha |\langle Tx, x\rangle |^{2p}  + (1-\alpha) \| Tx \|^{2p} \right ) \\
&\leq&  \left (\alpha \langle |R|^{2p} \tilde{x},\tilde{x}  \rangle + (1-\alpha) \langle |S|^{2p} \tilde{x},\tilde{x}  \rangle \right ) \\
&=&  \langle \left (\alpha |R|^{2p}+(1-\alpha) |S|^{2p}   \right)  \tilde{x},\tilde{x} \rangle \\
&\leq& \left \| \alpha |R|^{2p}+(1-\alpha) |S|^{2p}  \right \|.
\end{eqnarray*}
Therefore, taking supremum over all unit vectors in $\oplus_{i=1}^n \mathcal{H}$, we get the desired inequality.
\end{proof}

We simply state the following result and omit its proof, as it can be completed using similar arguments as given in the proof of Theorem \ref{est8}.

\begin{theorem}\label{est9}
Let $T=(T_{ij})$ be an $n\times n$ operator matrix, where $T_{ij}\in \mathcal{B}(\mathcal{H})$. Let $f$ and $g$ be two non-negative continuous functions on $[0,\infty)$ such that $f(t)g(t)=t,$ $\forall ~ t \geq 0 $. Then
\begin{eqnarray*}
\|T\|_{\alpha} & \leq & \sqrt{ \left \|\alpha |R|^{2}+(1-\alpha) |S|^{2} \right \| },
\end{eqnarray*}
where $ r_{ij}=\begin{cases}
		\frac{1}{2}\left \| f^2(|T_{ii}|)  + g^2(|T_{ii}^*|) \right \| &\text{if} ~i=j \\
		\frac{1}{2} \left (\|f^2(|T_{ij}|)\|^{\frac{1}{2}} \|g^2(|T_{ij}^*|)\|^{\frac{1}{2}}+\|f^2(|T_{ji}|)\|^{\frac{1}{2}} \|g^2(|T_{ji}^*|)\|^{\frac{1}{2}} \right ) &\text{if} ~i\neq j,  \\
		\end{cases}$\\ $R=(r_{ij})_{n \times n}$ and  $S=(s_{ij})_{n \times n}$, $s_{ij} =\|T_{ij}\|$.
\end{theorem}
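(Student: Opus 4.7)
The plan is essentially to specialize the argument of Theorem~\ref{est8} to $p=1$; indeed, Theorem~\ref{est9} is literally the $p=1$ case of Theorem~\ref{est8}, so one may either invoke it directly or repeat the proof while omitting the final convexity step. I would begin by fixing a unit vector $x=(x_1,\ldots,x_n)\in\oplus_{i=1}^n\mathcal{H}$ and the associated unit vector $\tilde{x}=(\|x_1\|,\ldots,\|x_n\|)\in\mathbb{C}^n$.

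Next I would establish the two ingredient bounds $|\langle Tx,x\rangle|^2\le\langle |R|^2\tilde{x},\tilde{x}\rangle$ and $\|Tx\|^2\le\langle |S|^2\tilde{x},\tilde{x}\rangle$. For the first, expand $\langle Tx,x\rangle=\sum_{i,j}\langle T_{ij}x_j,x_i\rangle$ and apply Lemma~\ref{lem7} with the functions $f,g$ to obtain $|\langle T_{ij}x_j,x_i\rangle|\le\|f(|T_{ij}|)x_j\|\,\|g(|T_{ij}^*|)x_i\|$. Split the double sum into diagonal and off-diagonal parts: on the diagonal, apply the AM-GM inequality to bound the product by $\tfrac{1}{2}\langle(f^2(|T_{ii}|)+g^2(|T_{ii}^*|))x_i,x_i\rangle\le\tfrac{1}{2}\|f^2(|T_{ii}|)+g^2(|T_{ii}^*|)\|\,\|x_i\|^2$, while off the diagonal bound each factor separately to get $\|f^2(|T_{ij}|)\|^{1/2}\|g^2(|T_{ij}^*|)\|^{1/2}\|x_i\|\|x_j\|$. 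Reassembling yields $|\langle Tx,x\rangle|\le\langle\tilde{T}\tilde{x},\tilde{x}\rangle$ for a matrix $\tilde{T}$ with nonnegative real entries whose symmetrization is precisely $R$; the imaginary part contribution vanishes, and Lemma~\ref{lem9} together with Lemma~\ref{lem5} (with exponent $2$) then let me square through to obtain $|\langle Tx,x\rangle|^2\le\langle |R|^2\tilde{x},\tilde{x}\rangle$.

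For the second bound, I would proceed exactly as in the proof of Theorem~\ref{est6}: expand $\|Tx\|^2=\sum_{i,j,k}\langle T_{ki}^*T_{kj}x_j,x_i\rangle$, bound each summand by $\|T_{ki}\|\,\|T_{kj}\|\,\|x_j\|\,\|x_i\|$, and recognize the resulting sum as $\langle|S|^2\tilde{x},\tilde{x}\rangle$.

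Combining the two estimates with weights $\alpha$ and $1-\alpha$ gives
\[
\alpha|\langle Tx,x\rangle|^2+(1-\alpha)\|Tx\|^2\le\langle(\alpha|R|^2+(1-\alpha)|S|^2)\tilde{x},\tilde{x}\rangle\le\|\alpha|R|^2+(1-\alpha)|S|^2\|,
\]
and taking the supremum over unit vectors in $\oplus_{i=1}^n\mathcal{H}$ yields the desired inequality. The only nontrivial bookkeeping lies in correctly identifying $R$ as the symmetrization of $\tilde{T}$ while matching indices; since no convexity step is needed for $p=1$, I do not anticipate any genuine obstacle.
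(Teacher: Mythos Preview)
Your proposal is correct and matches the paper's own approach: the paper omits the proof of Theorem~\ref{est9} entirely, stating only that it can be completed using similar arguments as in the proof of Theorem~\ref{est8}, which is precisely what you outline (the $p=1$ specialization with no convexity step needed). The details you give---Lemma~\ref{lem7} plus AM--GM on the diagonal, the $\|Tx\|^2$ estimate from Theorem~\ref{est6}, and the identification of $R$ as the symmetrization of $\tilde{T}$---are exactly the ingredients of the proof of Theorem~\ref{est8}.
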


The following numerical radius inequality follows easily from Theorem \ref{est9} by using Lemma \ref{norm}.

\begin{cor}\label{est9_c1}
Let $T=(T_{ij})$ be an $n\times n$ operator matrix, where $T_{ij}\in \mathcal{B}(\mathcal{H})$. Let $f$ and $g$ be two non-negative continuous functions on $[0,\infty)$ such that $f(t)g(t)=t,$ $\forall ~ t \geq 0 $. Then
\begin{eqnarray*}
w(T) & \leq & \min_{0\leq \alpha \leq 1} \sqrt {\left \|\alpha |R|^{2}+(1-\alpha) |S|^{2} \right \| },
\end{eqnarray*}
where $R,S$ are same as described in Theorem \ref{est9}.
\end{cor}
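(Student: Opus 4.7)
The plan is to chain Theorem \ref{est9} with the first inequality of Lemma \ref{norm}. Since $R$ and $S$ are defined in terms of $T_{ij}$, $f$, and $g$ alone (and are independent of $\alpha$), the upper bound in Theorem \ref{est9},
\[
\|T\|_{\alpha} \leq \sqrt{\bigl\|\alpha |R|^{2} + (1-\alpha)|S|^{2}\bigr\|},
\]
holds for every $\alpha \in [0,1]$.

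Next, I would recall from Lemma \ref{norm} that $w(T) \leq \|T\|_{\alpha}$ for every $\alpha \in [0,1]$. Combining this with the bound above yields
\[
w(T) \leq \sqrt{\bigl\|\alpha |R|^{2} + (1-\alpha)|S|^{2}\bigr\|} \quad \text{for each } \alpha \in [0,1].
\]
Since the left-hand side does not depend on $\alpha$ while the right-hand side does, we are free to pass to the infimum over $\alpha \in [0,1]$. The map $\alpha \mapsto \|\alpha |R|^2 + (1-\alpha)|S|^2\|$ is continuous on the compact interval $[0,1]$, so the infimum is attained and equals $\min_{0 \leq \alpha \leq 1}\sqrt{\|\alpha|R|^2+(1-\alpha)|S|^2\|}$, producing the asserted inequality.

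There is no substantive obstacle here; the corollary is essentially a two-line deduction that packages Theorem \ref{est9} together with the comparison $w(T)\leq \|T\|_\alpha$ from Lemma \ref{norm}, and then optimizes the resulting one-parameter family of bounds. The only minor point worth flagging is to verify that $R$ and $S$ are indeed the same matrices as in Theorem \ref{est9}, so that the same bound applies uniformly in $\alpha$ before taking the minimum.
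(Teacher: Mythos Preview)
Your proposal is correct and matches the paper's own approach: the paper states that the corollary ``follows easily from Theorem \ref{est9} by using Lemma \ref{norm},'' which is exactly the chain $w(T)\leq \|T\|_{\alpha}\leq \sqrt{\|\alpha|R|^2+(1-\alpha)|S|^2\|}$ followed by optimization over $\alpha$ that you describe. Your added remark on continuity to justify that the infimum is a minimum is a nice touch the paper omits.
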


\begin{remark}
In particular, if we consider $\alpha =1$ in Corollary \ref{est9_c1} then using Lemma \ref{lem}, we get 
\begin{eqnarray*}
w(T) & \leq & \min_{0\leq \alpha \leq 1} \sqrt {\left \|\alpha |R|^{2}+(1-\alpha) |S|^{2} \right \|}\\
&\leq & w(\tilde{T}),
\end{eqnarray*}
where $\tilde{T} =(\tilde{t_{ij}})_{n\times n}$, $\tilde{t_{ij}}=\begin{cases}
		\frac{1}{2}\left \| f^2(|T_{ii}|)  + g^2(|T_{ii}^*|) \right \| &\text{if} ~i=j \\
		\|f^2(|T_{ij}|)\|^{\frac{1}{2}} \|g^2(|T_{ij}^*|)\|^{\frac{1}{2}} &\text{if} ~i\neq j.
		\end{cases}$ \\
Note that the existing inequality in \cite[Th. 3.3]{BP} follows from Corollary \ref{est9_c1}.  
\end{remark}

\bibliographystyle{amsplain}

\end{document}